\theoremstyle{plain}
\newtheorem{theorem}{Theorem}
\newtheorem{corollary}{Corollary}
\newtheorem{lemma}{Lemma}
\newtheorem{proposition}{Proposition}
\theoremstyle{definition}
\newtheorem{definition}{Definition}
\newtheorem{remark}{Remark}
\newtheorem{example}{Example}
\numberwithin{equation}{section}
\begin{document}

\title
{Multiresolution expansions and wavelets in Gelfand-Shilov spaces}

\author[S. Pilipovi\'{c}]{Stevan Pilipovi\'{c}}
\address{S. Pilipovi\'{c}\\ University of Novi Sad, Faculty of Sciences\\ Department
 of Mathematics and Informatics\\ Trg Dositeja Obradovi\' ca 4\\ 21000 Novi Sad \\ Serbia }
\email{pilipovics@yahoo.com}
\thanks{S. Pilipovi\'{c} and N. Teofanov were supported by Serbian Ministry of Education and Science through Project 174024, project 19/6-020/961-47/18 MNRVOID of the Republic of Srpska and ANACRES.}

\author[D. Raki\'{c}]{Du\v{s}an Raki\'{c}}
\address{D. Raki\'{c}\\ University of Novi Sad\\ Faculty of Technology\\ Bul. cara Lazara 1\\ 21000 Novi Sad \\ Serbia}
\email{drakic@tf.uns.ac.rs}
\thanks{ D. Raki\'{c} was supported by
Serbian Ministry of Education and Science through Project III44006 and by Provincial Secretariat for Higher Education and Scientific Research through Project 142-451-2102/2019.}

\author[N. Teofanov]{Nenad Teofanov}
\address{N. Teofanov\\ University of Novi Sad, Faculty of Sciences\\ Department
 of Mathematics and Informatics\\ Trg Dositeja Obradovi\' ca 4\\ 21000 Novi Sad \\ Serbia }
\email{nenad.teofanov@dmi.uns.ac.rs}

\author[J. Vindas]{Jasson Vindas}
\address{J. Vindas\\ Department of Mathematics: Analysis, Logic and Discrete Mathematics\\ Ghent University\\ Krijgslaan 281\\ B 9000 Gent\\ Belgium}
\email{jasson.vindas@UGent.be}
\thanks{J. Vindas was supported by Ghent University through the BOF-grants 01J11615 and 01J04017.}

\subjclass[2010]{42C40, 46E10, 46F05, 46F12} \keywords{multiresolution expansions; multiresolution analysis; wavelet
expansions; Gelfand-Shilov spaces;
ultradistributions; subexponential decay}

\begin{abstract}
We study approximation properties generated by highly regular
scaling functions and orthonormal wavelets. These properties are
conveniently described in the framework of Gelfand-Shilov spaces.
Important examples of multiresolution analyses for which our
results apply arise in particular from Dziuba\'nski-Hern\'andez
construction of band-limited wavelets with subexponential decay.
Our results are twofold. Firstly, we obtain approximation
properties of multiresolution expansions of Gelfand-Shilov
functions and (ultra)distributions. Secondly, we establish
convergence of wavelet series expansions in the same regularity
framework.

\end{abstract}

\maketitle

\section{Introduction}
\label{intro}

Multiresolution analysis and orthogonal wavelet bases are very powerful tools in several areas of mathematics and its applications. They are particularly useful in approximation theory  and their approximation properties have been extensively studied  in many function and distribution spaces \cite{HW,Mey}. These approximation features are intimately connected with the regularity properties of the scaling functions and wavelets, where regularity is usually quantified by how smooth they are and how fast they (and their derivatives) decay at infinity. As it is well-documented in the literature, there is however a trade-off between the latter two properties. For instance, it is
well-known that smooth orthonormal wavelets, with all derivatives
bounded,  cannot have exponential decay \cite[Corollary 5.5.3]{Dau}.

In \cite{DzH} Dziuba\'{n}ski and Hern\'{a}ndez constructed an orthonormal wavelet with \emph{subexponential} decay; see also \cite{FKU} for other orthonormal wavelets having that decay. Their construction has been generalized by various other authors \cite{MT,pathak-s2008}. The Dziuba\'{n}ski-Hern\'{a}ndez wavelets are of Lemari\'{e}-Meyer type (i.e., band-limited), hence, they are entire functions. They are thus also MRA wavelets \cite{HW}. We have observed here that they as well as corresponding scaling functions belong to Gelfand-Shilov spaces (see Section \ref{Sec1} for definitions and basic properties). It is therefore natural to investigate their approximation properties in such spaces. The Gelfand-Shilov spaces were introduced in the context of parabolic initial-value problems \cite{GS}, and have been widely used afterwards
in situations when both (sub/super)exponential type decay and extension to the complex domain are important, e.g., in the study of traveling waves. More details about applications of  the Gelfand-Shilov spaces can be found in \cite{Gram,NR} and references therein.

In this article we introduce the notions of $(\rho_1,\rho_2)$-regular MRA and orthonormal wavelets, where the parameters $\rho_1$ and $\rho_2$ measure the Gelfand-Shilov regularity of the scaling functions and wavelets. Our goal is to study their effectiveness to approximate functions and (ultra)distributions in Gelfand-Shilov spaces. As we explain in Section \ref{Sec MRA-wavelet}, the Dziuba\'{n}ski-Hern\'{a}ndez wavelets are particular examples of the MRA and orthonormal wavelets covered by our considerations.

Our main results are Theorem \ref{th conv MRA rho regular} and Theorem \ref{th convergence}. Interestingly, although their proofs are quite different from each other, there is a loss of regularity phenomenon in both main results that is quantified by the same parameter. Theorem \ref{th conv MRA rho regular} deals with the convergence of multiresolution expansions in Gelfand-Shilov spaces. Its proof is based on a refinement of Meyer's powerful method \cite[Section 2.6]{Mey}. Our adaptation of the method involves several subtle points related to the nature of Gelfand-Shilov spaces  which we believe are interesting in its own right. Theorem \ref{th convergence} establishes convergence properties of wavelet series in subspaces of  Gelfand-Shilov spaces consisting of functions for which all moments vanish. Our approach in Section \ref{SecWaveletGS} partly relies on continuity properties of the wavelet transform, obtained by the authors in \cite{PRTV}.

We end this introduction with a few words related to the vanishing moment conditions imposed in Section \ref{SecWaveletGS}. First of all, notice that if an orthonormal wavelet belongs to the Schwartz space $\mathcal{S}(\mathbb{R}^{d})$, then all of its moments
must vanish \cite{HW}. In \cite{hol1} Holschneider develops a distributional theory for the continuous wavelet transform based on continuity theorems for the Lizorkin spaces (cf. \cite{PRV}), which has been used in \cite{SanVin} to establish the convergence of wavelet series in the same spaces. This distributional theory has important implications in Tauberian theory \cite{pilipovic-vindas2019}. An ultradistributional framework for wavelet transforms is provided in \cite{PRTV} (cf. \cite{FKY}).


\subsection{Notation}The notation $ A\lesssim B $ means that $ A
\leq C \cdot B $ for some positive implicit constant $ C$. The dual pairing
between a test function space $ {\mathcal A}$ and its dual
${\mathcal A'}$ is denoted by $\langle \cdot, \cdot\rangle
=\:_{\mathcal A'}\langle \cdot, \cdot
\rangle_{\mathcal A} $.

\section{Gelfand-Shilov type spaces and the wavelet transform}  \label{Sec1}
We collect in this preliminary section some facts about Gelfand-Shilov spaces and mapping properties of wavelet transforms on these spaces.

Let $ {\rho_1}, {\rho_2} \geq 0.$ A function $ \varphi \in \mathcal{S}(\mathbb{R}^d) $ belongs to
the Gelfand-Shilov space $ \mathcal{S}_{{\rho_2}}^{{\rho_1}}
(\mathbb{R}^d)$  if  there exists  a
constant $  h > 0 $ such that
$$
|x^{\alpha} \varphi^{(\beta)} (x)| \lesssim h^{-|\alpha + \beta|}
\, \alpha!^{{\rho_2}} \beta!^{{\rho_1}}, \qquad  x \in \mathbb{R}^d,
\, \alpha, \beta \in \mathbb{N}^d,
$$
with implicit constant being independent of $ \alpha $ and $ \beta. $ The space $ \mathcal{S}_{{\rho_2}}^{{\rho_1}} (\mathbb{R}^d)$ is
nontrivial \cite{GS} if and only if $ {\rho_1} + {\rho_2} > 1 $ or  $\rho_1+\rho_2=1$ and $\rho_1,\rho_2>0$. We note that if $\rho_{1}=1$ the elements of  $ \mathcal{S}_{{\rho_2}}^{{\rho_1}} (\mathbb{R}^d)$ are real analytic functions, and if $\rho_1<1$ they are even entire functions.
The
family of norms
\begin{equation*} 
 p_h^{{\rho_1}, {\rho_2}} (\varphi) = \sup_{x \in \mathbb{R}^d, \alpha,
\beta \in \mathbb{N}^d} \frac{h^{|\alpha +
\beta|}}{\alpha!^{{\rho_2}} \beta!^{{\rho_1}}} \, |
x^{\alpha}\partial^{\beta}\varphi(x)|, \qquad h > 0,
\end{equation*}
defines the canonical inductive limit topology of $
\mathcal{S}_{{\rho_2}}^{{\rho_1}} (\mathbb{R}^d), $ and it becomes a
(DFS)-space with this topology \cite[Proposition 3.5]{DV} (in fact, one can show it is a (DFN)-space \cite[Proposition 2.11]{PPV}). Consequently, $
\mathcal{S}_{{\rho_2}}^{{\rho_1}} (\mathbb{R}^d)$ is a Montel space \cite[Corollary A.5.11]{Mor}. Observe that the
family of norms
\begin{equation} \label{norma-GS2}
p_{h, c}^{{\rho_1}, {\rho_2}} (\varphi) = \sup_{x \in
\mathbb{R}^d, \beta \in \mathbb{N}^d}
\frac{h^{|\beta|}}{\beta!^{{\rho_1}}} \, e^{c
|x|^{\frac{1}{\rho_2}}} |\partial^{\beta}\varphi(x)|, \qquad h, c > 0,
\end{equation}
induces an equivalent topology on $ \mathcal{S}_{{\rho_2}}^{{\rho_1}} (\mathbb{R}^d) $. Also, the Fourier transform $\mathcal{F}$ is an isomorphism between $
\mathcal{S}_{{\rho_2}}^{{\rho_1}} (\mathbb{R}^d) $ and $
\mathcal{S} ^{{\rho_2}} _{{\rho_1}} (\mathbb{R}^d). $
 We denote by $ \mathcal{D}^{\rho_1} (\mathbb{R}^d) $ the subspace of $ \mathcal{S}_{{\rho_2}}^{{\rho_1}}
(\mathbb{R}^d)$ consisting of compactly supported Gevrey ultradifferentiable functions (by the Denjoy-Carlemann theorem \cite{Rudin}, it is non-trivial if and only if $ \rho_1 > 1 $). Note then \cite{GS} that $ \mathcal{S}_{0}^{{\rho_1}}
(\mathbb{R}^d) =\mathcal{D}^{\rho_1} (\mathbb{R}^d) $ and $ \mathcal{S}_{\rho_2}^{0}
(\mathbb{R}^d) =\mathcal{F}(\mathcal{D}^{\rho_2} (\mathbb{R}^d) )$.

We are interested in the wavelet transform in the context of Gelfand-Shilov spaces. We use the notation $ \mathbb{H}^{d + 1} = \mathbb{R}^d \times\mathbb{R}_+. $ In general we call an integrable function $\psi$ a wavelet if $
\int_{\mathbb{R}^{d}} \psi (x) dx = 0$. When $ \psi \in {\mathcal S}^{\rho_1}_{\rho_2}
(\mathbb{R}^d)$ one can define the wavelet transform of an
ultradistribution $ f \in ({\mathcal S}^{\rho_1}_{\rho_2}
(\mathbb{R}^d))' $ with respect to the wavelet $ \psi$ as
\begin{equation}  \label{Wavelet}
\mathcal{W}_{\psi} f (b, a) = \left\langle f (x), \frac{1}{a^d}
\bar{\psi}\left(\frac{x - b}{a}\right) \right\rangle =
\frac{1}{a^d}\int_{\mathbb{R}^d} f (x)  \bar{\psi}\left(\frac{x -
b}{a}\right) \,\mathrm{d}x,
\end{equation}
where $(b,a)\in \mathbb{H}^{d+1}$. This definition of course extends to $
\psi \in {\mathcal A} $ and $ f \in {\mathcal A}' $ whenever the
dual pairing in \eqref{Wavelet} makes sense. In particular, for
$ \psi,f \in L^2 (\mathbb{R}^d)$, the wavelet transform of $f \in  L^2 (\mathbb{R}^d)$ is given by the integral formula in \eqref{Wavelet}; the same applies for a measurable function $f$ satisfying bounds  $|f(x)|\lesssim e^{c|x|^{1/\rho_2}}$, for each $c>0$, when the wavelet $ \psi \in {\mathcal S}^{\rho_1}_{\rho_2}(\mathbb{R}^d)$.
The next proposition\footnote{The result is stated in \cite{PRTV} under the extra assumption $\rho_1>0$, but the proof given there actually works for $\rho_1=0$ as well. The same comment applies to Theorem \ref{th3}.} estimates the order of growth of the wavelet transform of an ultradistribution.

 \begin{proposition}[{\cite[Proposition 3 and Remark 4 ]{PRTV}}] \label{Calderon 2}
Let $ s>
{\rho_1}\geq 0$ and $ t>{\rho_2}>0$. If $ \psi \in {\mathcal
S}^{{\rho_1}}_{{\rho_2}} (\mathbb{R}^d) $ and $ B $ is a bounded set in $ ({\mathcal
S}^s_t (\mathbb{R}^d))' , $ then for  each $ k > 0$,
$$
|{\mathcal W}_{\psi} f(b, a)| \lesssim
 e^{k \big( a^{\frac{1}{t - {\rho_2}}} + (\frac{1}{a})^{\frac{1}{s - {\rho_1}}}
+ |b|^{\frac{1}{t}}  \big)}, \qquad (b,a) \in \mathbb{H}^{d+1},
$$
uniformly for $ f \in B. $
\end{proposition}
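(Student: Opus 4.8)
The plan is to combine the duality definition of the wavelet transform with the (DFS)-structure of the Gelfand--Shilov spaces recalled in Section~\ref{Sec1}. First I would write
$$
\mathcal{W}_{\psi}f(b,a)=\langle f,\Psi_{b,a}\rangle,\qquad \Psi_{b,a}(x)=\frac{1}{a^{d}}\,\bar{\psi}\!\left(\frac{x-b}{a}\right),
$$
and observe that $\Psi_{b,a}\in \mathcal{S}^{\rho_{1}}_{\rho_{2}}(\mathbb{R}^{d})\subseteq \mathcal{S}^{s}_{t}(\mathbb{R}^{d})$ since $\rho_{1}\le s$ and $\rho_{2}\le t$, so the pairing is legitimate. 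Because $\mathcal{S}^{s}_{t}(\mathbb{R}^{d})$ is a (DFS)-space whose topology is generated by the norms $p^{s,t}_{h,c}$ of type \eqref{norma-GS2}, bounded sets in its dual are equicontinuous; concretely, for every pair $h,c>0$ there is a constant $C_{h,c}$, depending only on $B$, $h$ and $c$, with $|\langle f,\varphi\rangle|\le C_{h,c}\,p^{s,t}_{h,c}(\varphi)$ for all $f\in B$ and all $\varphi$. This reduces the whole statement to an estimate of $p^{s,t}_{h,c}(\Psi_{b,a})$, with $h$ and $c$ to be chosen at the very end in terms of $k$.

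To estimate $p^{s,t}_{h,c}(\Psi_{b,a})$ I would use $\partial^{\beta}\Psi_{b,a}(x)=a^{-d-|\beta|}(\partial^{\beta}\bar\psi)((x-b)/a)$ together with the membership $\psi\in \mathcal{S}^{\rho_{1}}_{\rho_{2}}(\mathbb{R}^{d})$, in the form $p^{\rho_{1},\rho_{2}}_{h_{0},c_{0}}(\psi)=:M<\infty$ for suitable $h_{0},c_{0}>0$ (cf. \eqref{norma-GS2}), i.e. $|\partial^{\beta}\psi(u)|\le M h_{0}^{-|\beta|}\beta!^{\rho_{1}}e^{-c_{0}|u|^{1/\rho_{2}}}$. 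After the substitution $u=(x-b)/a$ this yields
$$
p^{s,t}_{h,c}(\Psi_{b,a})\ \le\ \frac{M}{a^{d}}\left(\sup_{\beta}\Big(\frac{h}{ah_{0}}\Big)^{|\beta|}\beta!^{\rho_{1}-s}\right)\left(\sup_{u\in\mathbb{R}^{d}}e^{c|b+au|^{1/t}-c_{0}|u|^{1/\rho_{2}}}\right).
$$
For the first factor I would invoke the elementary inequality $\sup_{m}q^{m}/m!^{r}\le e^{rq^{1/r}}$ (a single term of the series for $e^{q^{1/r}}$, raised to the power $r$) together with $\beta!\ge d^{-|\beta|}|\beta|!$; since $r:=s-\rho_{1}>0$ this gives a bound $e^{C_{1}(h)(1/a)^{1/(s-\rho_{1})}}$ with $C_{1}(h)\to 0$ as $h\to 0^{+}$. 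For the second factor I would use $|b+au|^{1/t}\le\kappa_{t}\bigl(|b|^{1/t}+a^{1/t}|u|^{1/t}\bigr)$ for some $\kappa_{t}\ge 1$ and then maximize $v\mapsto c\kappa_{t}a^{1/t}v^{1/t}-c_{0}v^{1/\rho_{2}}$ over $v=|u|\ge 0$; because $t>\rho_{2}$ this Young/Legendre-type maximization is finite and produces a factor $e^{\kappa_{t}c|b|^{1/t}}e^{C_{2}(c)a^{1/(t-\rho_{2})}}$ with $C_{2}(c)\to 0$ as $c\to 0^{+}$.

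Collecting the two factors, and absorbing the polynomial factor $a^{-d}$ via the crude bound $a^{-d}\lesssim e^{\eps(1/a)^{1/(s-\rho_{1})}}$ (valid for each $\eps>0$, with implicit constant depending on $\eps$), I would obtain
$$
|\mathcal{W}_{\psi}f(b,a)|\ \lesssim\ \exp\!\Big(\big(C_{1}(h)+\eps\big)(1/a)^{1/(s-\rho_{1})}+\kappa_{t}c\,|b|^{1/t}+C_{2}(c)\,a^{1/(t-\rho_{2})}\Big),
$$
uniformly for $f\in B$. Given $k>0$, it then suffices to take $\eps=k/2$, $h$ so small that $C_{1}(h)\le k/2$, and $c$ so small that $\max(\kappa_{t}c,\,C_{2}(c))\le k$, all possible by the stated limits; this is exactly the claimed estimate. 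I expect the genuine work to be in the spatial supremum: one must carry out the optimization carefully enough to confirm that the exponent it contributes is precisely $a^{1/(t-\rho_{2})}$ and that its coefficient can be pushed below any prescribed $k$ by shrinking $c$. The derivative-side supremum is of the same nature but lighter, thanks to the exponential-series trick, and the functional-analytic reduction in the first paragraph is routine given the (DFS) property recalled in Section~\ref{Sec1}.
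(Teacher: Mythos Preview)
Your argument is correct. The paper does not actually supply its own proof of this proposition; it is quoted from \cite[Proposition~3 and Remark~4]{PRTV} (with the footnote noting that the case $\rho_1=0$ goes through unchanged), so there is nothing in the present text to compare against beyond confirming soundness.

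For the record, your approach is the expected one and matches the spirit of \cite{PRTV}: reduce via barrelledness/equicontinuity to a seminorm estimate on the dilated--translated wavelet, then exploit the gap $s>\rho_1$ through the factorial inequality $\sup_{m}q^{m}/m!^{r}\le e^{rq^{1/r}}$ to produce the $(1/a)^{1/(s-\rho_1)}$ exponent, and the gap $t>\rho_2$ through the one-variable maximization of $c\,a^{1/t}v^{1/t}-c_{0}v^{1/\rho_2}$ to produce the $a^{1/(t-\rho_2)}$ exponent. The one point worth making explicit is why equicontinuity of $B$ yields a bound against \emph{every} norm $p^{s,t}_{h,c}$: since $\mathcal{S}^{s}_{t}(\mathbb{R}^{d})$ carries the inductive limit topology of the Banach steps $E_{h,c}=\{\varphi:\,p^{s,t}_{h,c}(\varphi)<\infty\}$ (with the steps increasing as $h,c\downarrow 0$), the polar $B^{\circ}$ being a $0$-neighbourhood forces $B^{\circ}\cap E_{h,c}$ to be a $0$-neighbourhood in \emph{each} step, hence the uniform estimate $|\langle f,\varphi\rangle|\le C_{h,c}\,p^{s,t}_{h,c}(\varphi)$ for all $h,c>0$. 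With that clarified, your choice of $h,c$ small (and $\eps=k/2$) gives exactly the claimed inequality.
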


In order to study further mapping properties of the wavelet transform on Gelfand-Shilov spaces, we have introduced in \cite{PRTV} a number of spaces of highly localized functions on $\mathbb{R}^{d}$, which are Gelfand-Shilov analogs of those considered in Holschneider's approach to the wavelet transform in Schwartz' spaces \cite{hol1} (cf. \cite{PRV}).  We define $ (\mathcal{S}_{{\rho_2} } ^{{\rho_1}})_0
(\mathbb{R}^d) $ as the closed subspace of $
\mathcal{S}_{{\rho_2}}^{{\rho_1}} (\mathbb{R}^d)$
consisting of $\varphi\in \mathcal{S}_{{\rho_2}}^{{\rho_1}} (\mathbb{R}^d)$
whose moments of any order vanish, that is,
\begin{equation*} 
(\mathcal{S}_{{\rho_2} }^{{\rho_1}})_0
(\mathbb{R}^d)=\left\{\varphi \in
\mathcal{S}_{{\rho_2}}^{{\rho_1}} (\mathbb{R}^d): \int_{\mathbb{R}^{d}}x^{\alpha}\varphi(x)\mathrm{d}x = 0,\ \forall \alpha \in \mathbb{N}^{d} \right\}.
\end{equation*}
The Denjoy-Carleman theorem implies that $ (\mathcal{S}_{{\rho_2}}^{{\rho_1}})_0
(\mathbb{R}^d) $ is non-trivial if and only if $ \rho_2 > 1$. The following Gelfand-Shilov type spaces in the upper half-plane are appropriate to study the range of the wavelet transform in this context. Let $ s, t, \tau_1, \tau_2 > 0 $. A smooth function $ \Phi $
belongs to $ \mathcal{S}^{s}_{t, \tau_1, \tau_2}
(\mathbb{H}^{d+1})$  if for every $\alpha \in \mathbb{N} $ there
exists a constant $h>0$ such that
\begin{equation} \label{norma-GSupper}
p_{\alpha, h}^{s, t, \tau_1, \tau_2} (\Phi) = \sup_{((b, a),
\beta) \in \mathbb{H}^{d+1} \times \mathbb{N}^d}
\frac{h^{|\beta|}}{\beta!^{s}} \, e^{h\left(a^{1/\tau_1} +
a^{-1/\tau_2} + |b|^{1/t} \right)} \, \left|\partial_a^{\alpha}
\partial_b^{\beta} \Phi (b, a) \right|.
\end{equation}
The topology of $ \mathcal{S}^{s}_{t,\tau_1, \tau_2}
(\mathbb{H}^{d+1}) $ is defined via the family of seminorms
\eqref{norma-GSupper}, as inductive limit with respect to $h$ and
projective limit with respect to $\alpha$.

\begin{theorem} [{\cite[Theorem 1]{PRTV}}]\label{th3} Let  $\rho_1 \geq 0 $, $ \rho_2 > 1 $ and let
$ s >0$, $ t > \rho_1 + \rho_2 $,  $ \tau_1 >\rho_1  $ and $
\tau_2  > \rho_2 - 1$. Then the wavelet mapping
$$
{\mathcal W} : ({\mathcal S}^{\rho_1} _ {\rho_2})_0 (\mathbb{R}^d)
\times ({\mathcal S}^{\min\{s, \tau_2 - \rho_2 +1\}}_{1 - \rho_1+
\min\{t - \rho_2, \tau_1\} })_0 (\mathbb{R}^d) \to {\mathcal
S}^{s}_{t, \tau_1, \tau_2} (\mathbb{H}^{d+1}),
$$
given by $ {\mathcal W} : (\psi, \varphi) \mapsto {\mathcal
W}_{\psi} \varphi $, is continuous.
\end{theorem}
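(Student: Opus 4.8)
The plan is to verify directly that $\Phi:=\mathcal{W}_\psi\varphi$ belongs to $\mathcal{S}^{s}_{t,\tau_1,\tau_2}(\mathbb{H}^{d+1})$ by establishing the estimates \eqref{norma-GSupper}, keeping track of all implicit constants so as to obtain continuity of the bilinear map $\mathcal{W}$ at the end. Throughout, put $\sigma_1=\min\{s,\tau_2-\rho_2+1\}$ and $\sigma_2=1-\rho_1+\min\{t-\rho_2,\tau_1\}$ (so $\sigma_2>1$, whence the second domain is nontrivial), and replace the norms on $(\mathcal{S}^{\rho_1}_{\rho_2})_0$ and $(\mathcal{S}^{\sigma_1}_{\sigma_2})_0$ by the equivalent ones \eqref{norma-GS2}, so that $\psi$ and $\varphi$ together with all their derivatives satisfy Gevrey bounds with built-in subexponential decay. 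After the change of variables $x=b+au$ we have $\mathcal{W}_\psi\varphi(b,a)=\int_{\mathbb{R}^d}\varphi(b+au)\bar\psi(u)\,\mathrm{d}u$, and since $u_j\partial_{x_j}$ and $u_k\partial_{x_k}$ commute, differentiating under the integral sign (legitimate by the decay estimates) gives
\[
\partial_a^{\alpha}\partial_b^{\beta}\mathcal{W}_\psi\varphi(b,a)=\sum_{|\gamma|=\alpha}\frac{\alpha!}{\gamma!}\,I_{\gamma,\beta}(b,a),\qquad I_{\gamma,\beta}(b,a):=\int_{\mathbb{R}^d}u^{\gamma}\varphi^{(\beta+\gamma)}(b+au)\bar\psi(u)\,\mathrm{d}u.
\]
Since the topology of the target is projective in $\alpha$, we treat $\alpha$ as fixed and henceforth write $\lesssim_\alpha$ for inequalities with constants depending on $\alpha$; the polynomial weights $u^{\gamma}$ with $|\gamma|\le\alpha$ and the passage from $\varphi^{(\beta)}$ to $\varphi^{(\beta+\gamma)}$ will cost only such a constant together with a benign factor $(\beta+\gamma)!^{\sigma_1}\lesssim_\alpha H^{|\beta|}\beta!^{\sigma_1}\le H^{|\beta|}\beta!^{s}$, where $H$ depends only on the relevant seminorm of $\varphi$ (we use $\sigma_1\le s$). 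We also record that, $\varphi$ having vanishing moments of every order, so does each $\varphi^{(\beta+\gamma)}$ (its Fourier transform still vanishes to infinite order at the origin), and likewise $u^{\gamma}\bar\psi(u)$ has all moments vanishing. It therefore suffices to prove, for each $|\gamma|\le\alpha$, the three \emph{separate} global estimates
\[
|I_{\gamma,\beta}(b,a)|\lesssim_\alpha H^{|\beta|}\beta!^{s}e^{-h|b|^{1/t}},\quad |I_{\gamma,\beta}(b,a)|\lesssim_\alpha H^{|\beta|}\beta!^{s}e^{-h\,a^{-1/\tau_2}},\quad |I_{\gamma,\beta}(b,a)|\lesssim_\alpha (1+|b|)^{\alpha}H^{|\beta|}\beta!^{s}e^{-h\,a^{1/\tau_1}}
\]
for some $h>0$; multiplying these, taking cube roots, absorbing $(1+|b|)^{\alpha/3}$ into $e^{-\frac{h}{3}|b|^{1/t}}$, and summing over $\gamma$ yields $|\partial_a^{\alpha}\partial_b^{\beta}\Phi(b,a)|\lesssim_\alpha H^{|\beta|}\beta!^{s}e^{-h'(a^{1/\tau_1}+a^{-1/\tau_2}+|b|^{1/t})}$, which is exactly \eqref{norma-GSupper} once the seminorm parameter is chosen $\le\min\{h',H^{-1}\}$.

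The two decays in $a$ come from Taylor expansion played against the vanishing moments. For the bound as $a\to 0^{+}$, expand $\varphi^{(\beta+\gamma)}(b+au)$ in $u$ about the origin to order $N$; since $\int u^{\gamma+\delta}\bar\psi(u)\,\mathrm{d}u=0$ for all $\delta$, only the Taylor remainder survives, and the Gevrey-$\sigma_1$ bounds on the derivatives of $\varphi$, together with $\int|u|^{m}|\psi(u)|\,\mathrm{d}u\lesssim C^{m}m!^{\rho_2}$, give
\[
|I_{\gamma,\beta}(b,a)|\lesssim_\alpha H^{|\beta|}\beta!^{\sigma_1}\,C^{N}N!^{\sigma_1-1+\rho_2}\,a^{N}.
\]
Optimizing over $N$ by the elementary estimate $\inf_{N}C^{N}N!^{r}a^{N}\asymp e^{-c\,a^{-1/r}}$ (for $a$ small) and invoking $\sigma_1-1+\rho_2\le\tau_2$ — precisely the constraint $\sigma_1\le\tau_2-\rho_2+1$ built into $\sigma_1$ — gives the second estimate for $a\le 1$, while for $a\ge 1$ the crude bound $|I_{\gamma,\beta}(b,a)|\lesssim_\alpha H^{|\beta|}\beta!^{\sigma_1}$ suffices. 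Symmetrically, for the bound as $a\to\infty$ rewrite $I_{\gamma,\beta}(b,a)=a^{-d-\alpha}\int(y-b)^{\gamma}\varphi^{(\beta+\gamma)}(y)\bar\psi\big((y-b)/a\big)\,\mathrm{d}y$, expand $y\mapsto(y-b)^{\gamma}\bar\psi\big((y-b)/a\big)$ to order $N$ about the origin, and use that $\varphi^{(\beta+\gamma)}$ annihilates polynomials; since $(y-b)^{\gamma}$ is a polynomial of degree $\alpha$, in an $(N+1)$-fold derivative at least $N+1-\alpha$ of the derivatives hit $\bar\psi(\cdot/a)$, producing powers of $a^{-1}$ and Gevrey-$\rho_1$ derivatives of $\psi$, while $\int|\varphi^{(\beta+\gamma)}(y)|\,|y|^{m}\,\mathrm{d}y\lesssim_\alpha H^{|\beta|}\beta!^{\sigma_1}C^{m}m!^{\sigma_2}$, so that optimizing over $N$ produces $e^{-c\,a^{1/(\rho_1-1+\sigma_2)}}$. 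The decisive identity here is $\rho_1-1+\sigma_2=\min\{t-\rho_2,\tau_1\}\le\tau_1$, which is exactly how $\sigma_2$ was chosen; the factor $(1+|b|)^{\alpha}$ in the third estimate appears because $(y-b)^{\gamma}$ contributes terms $|b|^{|\gamma-\delta'|}$.

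The decay in $|b|$ is the delicate point, since neither Taylor expansion above produces any decay in $b$. For $|b|\le 1$ the trivial bound $|I_{\gamma,\beta}(b,a)|\lesssim_\alpha H^{|\beta|}\beta!^{\sigma_1}$ suffices, so assume $|b|\ge 1$ and split according to whether $a\le c_{\ast}|b|^{1-\rho_2/t}$ or $a>c_{\ast}|b|^{1-\rho_2/t}$ (note $1-\rho_2/t>0$ as $t>\rho_1+\rho_2\ge\rho_2$). In the first case, split the $u$-integral at $|u|=|b|/(2a)$: on $\{|u|\le|b|/(2a)\}$ one has $|b+au|\ge|b|/2$, so the decay of $\varphi^{(\beta+\gamma)}$ contributes $e^{-c(|b|/2)^{1/\sigma_2}}\lesssim e^{-h|b|^{1/t}}$ (using $\sigma_2\le t$, which follows from $\rho_1+\rho_2\ge 1$); on $\{|u|>|b|/(2a)\}$ the subexponential decay of $\psi$ contributes $e^{-c'(|b|/(2a))^{1/\rho_2}}$, and here $|b|/(2a)\ge|b|^{\rho_2/t}/(2c_{\ast})$, so this too is $\lesssim e^{-h|b|^{1/t}}$. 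In the second case $a\ge c_{\ast}$, so the $a\to\infty$ argument of the previous paragraph yields $|I_{\gamma,\beta}(b,a)|\lesssim_\alpha (1+|b|)^{\alpha}H^{|\beta|}\beta!^{s}e^{-c\,a^{1/(\rho_1-1+\sigma_2)}}$; combined with $a>c_{\ast}|b|^{1-\rho_2/t}$ and the inequality $(1-\rho_2/t)/(\rho_1-1+\sigma_2)\ge 1/t$ — equivalently $t-\rho_2\ge\min\{t-\rho_2,\tau_1\}$ — this gives $\lesssim_\alpha (1+|b|)^{\alpha}H^{|\beta|}\beta!^{s}e^{-h|b|^{1/t}}$, the polynomial factor being harmless after the final cube-root step.

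Finally one tracks constants: each implicit constant above is dominated by powers of a single seminorm $p^{\rho_1,\rho_2}_{h_0,c_0}(\psi)$ and a single seminorm $p^{\sigma_1,\sigma_2}_{h_1,c_1}(\varphi)$, while $h'$ and $H^{-1}$ stay bounded below in terms of the same data. Consequently the restriction of $\mathcal{W}$ to any product of Banach steps of the two domains is a bounded — hence continuous — bilinear map into a Banach step of $\mathcal{S}^{s}_{t,\tau_1,\tau_2}(\mathbb{H}^{d+1})$; as the domain carries the inductive-limit topology relative to such products of Banach steps (these being $(\mathrm{DFS})$-spaces), $\mathcal{W}$ is continuous. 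The main obstacle is precisely the $|b|$-decay: one must patch the region where the subexponential decay of $\psi$ is exploited to the region where the moment cancellation of $\varphi$ is exploited, and forcing the resulting subexponential exponents to agree with the target exponent $1/t$ is what dictates both the hypothesis $t>\rho_1+\rho_2$ and the stated value of $\sigma_2$, so that the chain of elementary inequalities among $s,t,\tau_1,\tau_2,\rho_1,\rho_2,\sigma_1,\sigma_2$ — notably $\sigma_2\le t$ and $(1-\rho_2/t)/(\rho_1-1+\sigma_2)\ge 1/t$ — closes up. A secondary nuisance is the bookkeeping ensuring that the polynomial weights $u^{\gamma}$ and the shifted derivatives $\varphi^{(\beta+\gamma)}$ cost only an $\alpha$-dependent constant and the harmless factor $\beta!^{s}$.
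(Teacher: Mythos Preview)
The paper does not actually contain a proof of this theorem: it is quoted verbatim from \cite[Theorem~1]{PRTV} as a preliminary result in Section~\ref{Sec1}, with no argument given here. There is therefore nothing in the present paper to compare your proposal against.

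That said, your sketch is a plausible direct attack and is, in outline, the standard way such continuity theorems are established: write $\mathcal{W}_\psi\varphi(b,a)=\int\varphi(b+au)\bar\psi(u)\,\mathrm{d}u$, differentiate under the integral, and then obtain the three decays in $a^{-1}$, $a$, and $|b|$ separately via Taylor expansion against the vanishing moments of $\psi$ (for $a\to 0$) and of $\varphi$ (for $a\to\infty$), with a splitting argument in the $(a,b)$-plane to pass from the $a$-decay to the $|b|$-decay. The arithmetic you record with the parameters is correct: $\sigma_1+\rho_2-1\le\tau_2$ and $\rho_1-1+\sigma_2\le\tau_1$ are exactly what the definitions of $\sigma_1,\sigma_2$ encode, and the chain $\sigma_2\le t$, $(1-\rho_2/t)/(\rho_1-1+\sigma_2)\ge 1/t$ closes because $\rho_1-1+\sigma_2=\min\{t-\rho_2,\tau_1\}\le t-\rho_2$. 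The cube-root trick to combine the three global bounds is also fine.

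One point deserves a bit more care than you give it. In the $a\to\infty$ step you Taylor-expand $(y-b)^{\gamma}\bar\psi\big((y-b)/a\big)$ about $y=0$ and use that $\varphi^{(\beta+\gamma)}$ annihilates polynomials; the remainder term then involves $\sup_y|\partial_y^{\mu}\big[(y-b)^{\gamma}\bar\psi((y-b)/a)\big]|$ for $|\mu|=N+1$, and one must check that the powers of $a^{-1}$, the Gevrey-$\rho_1$ bounds on $\psi^{(\mu')}$, and the moments $\int|y|^{N+1}|\varphi^{(\beta+\gamma)}(y)|\,\mathrm{d}y\lesssim C^{N}N!^{\sigma_2}$ really combine to $C^{N}N!^{\rho_1-1+\sigma_2}a^{-N}$ after the $1/(N+1)!$ from Taylor; this is routine but is where the exponent $\rho_1-1+\sigma_2$ actually materialises, and your write-up elides it. Apart from that, and the usual need to make the constant-tracking fully explicit to justify continuity between the Banach steps, the argument is sound.
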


\section{$(\rho_1,\rho_2)$-Regular MRA and wavelets}
\label{Sec MRA-wavelet}

In this section we introduce highly regular multiresolution analysis and wavelets. We will show  in the subsequent sections that they enjoy very good approximation features in context of Gelfand-Shilov spaces.  For the reader's convenience, we recall \cite{HW,Mey} that a multiresolution approximation (MRA) is
an increasing sequence $ \{V_m\}_{m \in \mathbb{Z}} $ of closed
linear subspaces of $ L^2 (\mathbb{R}^d) $ with the following four
properties:
\begin{itemize}
\item[(i)] $  \bigcap_{m\in\mathbb{Z} } {V_m} = \{0\} $ and $ \bigcup_{m \in\mathbb{Z}} {V_m} $ is dense in $ L^2
(\mathbb{R}^d); $
\item[(ii)] $ f(x) \in V_m \Leftrightarrow f(2x) \in V_{m + 1}, \; m \in \mathbb{Z}; $
\item[(iii)]  $ f(x) \in V_0 \Leftrightarrow f(x - n) \in V_0, \;  \, n \in \mathbb{Z}^d; $
\item[(iv)] there exists $ \phi \in L^{2}(\mathbb{R}^{d}) $ such that $ \{\phi(x
- n)\}_{n \in \mathbb{Z}^d} $ is an orthonormal basis of $ V_0. $
\end{itemize}
The function $\phi$ from (iv) is called a scaling function for the given MRA.

It is well known that the effectiveness of an MRA to approximate functions depends on how regular a scaling function could be chosen inside $V_0$. In this regard Meyer \cite{Mey} introduced the notion of $r$-regular MRA and studied approximation properties in subspaces of $\mathcal{S}'(\mathbb{R}^{d})$, see \cite{KosVin,PilTeo} for generalizations. The ensuing finer regularity notion is well-suited for our analysis.

\begin{definition}\label{def MRA rho regular} Let $\rho_1\geq 0$ and $\rho_2>1$. An MRA  is called $(\rho_1,\rho_2)$-regular if it possesses a scaling function $\phi\in \mathcal{S}^{\rho_1}_{\rho_2}(\mathbb{R}^{d}).$
\end{definition}

Whenever we speak about a  $(\rho_1,\rho_2)$-regular MRA, we implicitly fix a scaling function and always assume that $\phi\in \mathcal{S}^{\rho_1}_{\rho_2}(\mathbb{R}^{d})$. It is very important to point out that the condition $\rho_2>1$ in Definition \ref{def MRA rho regular} is dictated by the nature of an MRA. Indeed, we prove below in Remark \ref{rknoMRArho_2<1} that if $\rho_{2}\leq 1$, there is no MRA with scaling function $\phi\in \mathcal{S}_{\rho_2}^{\rho_1}(\mathbb{R}^{d})$. On the other hand, Example \ref{onbDzH} implies the existence of  $(\rho_1,\rho_2)$-regular MRA with the parameter constrains from our definition.

We also recall that a function $ \psi \in L^2 (\mathbb{R}) $ is called an {\em
orthonormal wavelet} if the set $ \{\psi_{m, n} : m \in
\mathbb{Z}, n \in \mathbb{Z}\} $ is an orthonormal basis of $ L^2
(\mathbb{R}),$ where
$$
\psi_{m, n}(x) = 2^{\frac{m}{2}} \psi(2^m
x - n), \qquad m,n \in \mathbb{Z},\  x \in \mathbb{R}.
$$
Note that if additionally $\psi \in L^{1}(\mathbb{R})$, then \cite[Theorem~3.3.1, p.~63]{Dau} the orthonormal wavelet necessarily satisfies $ \int_{\mathbb{R}} \psi (x) dx = 0$.

A powerful way to construct orthonormal wavelets is via MRA. The procedure how one can assign a wavelet to an MRA is very well understood and explained in the literature. Instead of giving any detail, we refer for instance to the book \cite{HW} by Hern\'{a}ndez and Weiss for an excellent account on the subject. If an orthonormal wavelet is associated to an MRA, we simply call it an \emph{MRA wavelet}. We mention that any band-limited orthonormal wavelet with continuous Fourier transform or any orthonormal wavelet belonging to  $\mathcal{S}(\mathbb{R})$ are MRA wavelets (cf. \cite[Corollary 3.16, p. 363]{HW}). Also, it is well-known \cite{HW,Mey} that  an orthonormal wavelet  $\psi\in\mathcal{S}(\mathbb{R})$ must automatically belong to $\mathcal{S}_{0}(\mathbb{R})$, the subspace of $\mathcal{S}(\mathbb{R})$ consisting of functions whose all moments vanish.

\begin{definition} An orthonormal wavelet $\psi$ is called $(\rho_1,\rho_2)$-regular if $\psi\in (
\mathcal{S}_{{\rho_2}}^{{\rho_1}}) (\mathbb{R})$ and if it arises from a $(\rho_1,\rho_2)$-regular MRA.
\end{definition}

It immediately follows that all moments of a $(\rho_1,\rho_2)$-regular orthonormal
wavelet $\psi$ vanish, namely,  $\psi\in (
\mathcal{S}_{{\rho_2}}^{{\rho_1}})_{0} (\mathbb{R})$.

The next example discusses the existence of $(\rho_1,\rho_2)$-regular MRA and wavelets. In fact, it turns out that the Dziuba\'nski-Hern\'andez
orthonormal wavelets, constructed in \cite{DzH}, are $(\rho_1,\rho_2)$-regular. Note that these kinds of wavelets are in particular of Lemari\'{e}-Meyer type \cite{HW,LM}. Tensorizing their associated one dimensional scaling functions, one readily obtains examples of $(\rho_1,\rho_2)$-regular MRA on $\mathbb{R}^{d}$.


\begin{example} \label{onbDzH}
Let  $ \rho_2 > 1. $ We consider here  Dziuba\'nski-Hern\'andez
orthonormal wavelets, which are constructed as follows. Fix $ a < \pi/3 $. Pick $ \varphi \in \mathcal{D}^{\rho_2} (\mathbb{R}) $ such that $ \mbox{supp }\varphi \subseteq [-a, a] $ and $ \int_{-\infty}^{\infty} \varphi (\xi) \, d\xi = \pi/2 $ (which exists in view of the Denjoy-Carlemann theorem).  One also considers $\varphi_2(\xi)=(1/2) \varphi(\xi/2)$ and the bell type function given by
$$
b(\xi) = \sin \Big ( \int_{-\infty} ^{\xi - \pi} \varphi (t)
dt \Big ) \cos \Big ( \int_{-\infty} ^{\xi - 2\pi} \varphi_{2} (t)
dt \Big )
$$
for $\xi>0$, and extended evenly to $ (-\infty, 0]$.  It is verified in \cite{DzH} that  $ \mbox{supp
}b \subseteq [-8 \pi/3, -2 \pi/3] \cup [2 \pi/3, 8 \pi/3]$ and that $b\in\mathcal{D}^{\rho_2}(\mathbb{R})$. Finally, the associated Lemari\'{e}-Meyer orthonormal wavelet to $b$ \cite{HW}, given in Fourier side as
\begin{equation} \label{dziu-her-wavelet}
\hat \psi (\xi) = e^{i \xi /2} b (\xi), \;\;\; \xi \in
\mathbb{R},
\end{equation}
satisfies $\psi\in \mathcal{F}(\mathcal{D}^{\rho_2}(\mathbb{R}))\subseteq \mathcal{S}^{\rho_1}_{\rho_2}(\mathbb{R})$ for all $\rho_1\geq 0$.

It is proved in \cite{HW} (see also \cite{LM,DzH}) that
$ \psi $ given by  \eqref{dziu-her-wavelet} is an orthonormal
wavelet. We have that $ \hat{\psi} \in \mathcal{D}^{\rho_2}
(\mathbb{R}) \subseteq \mathcal{S}_{\rho_1}^{\rho_2} (\mathbb{R}) $
and since $ 0 \notin \mbox{supp }\hat{\psi}, $ we obtain $ \psi \in
(\mathcal{S}^{\rho_1}_{\rho_2} )_{0}(\mathbb{R}). $ As mentioned above, it follows that $ \psi $ is an MRA wavelet, and there are several ways \cite{LM} to select an associated scaling function in $\mathcal{S}(\mathbb{R})$.
For example, we can choose

\begin{equation*}
|\hat \phi (\xi)|^2 = \left \{
\begin{array}{lll}
1 & \text{if} & |\xi| \leq 2\pi/3, \\
b^2 (2\xi) & \text{if} & 2\pi/3 \leq |\xi| \leq 4\pi/3, \\
0 & \text{if} & |\xi| \geq 4\pi/3, \;\;\; \xi \in \mathbb{R},
\end{array}
\right.
\end{equation*}
and take $\arg \hat{\phi}(\xi)=\xi$, that is, the smooth function $ \hat{\phi} (\xi) = e^{i \xi} |\hat{\phi} (\xi)|, $
see also \cite[Theorem 4.12, page 133]{HW}. Since the function $b\in\mathcal{D}^{\rho_2}(\mathbb{R})$,
we conclude that $\hat{\phi}\in\mathcal{D}^{\rho_2}(\mathbb{R})$ and therefore the scaling function of this MRA also satisfies $\phi\in\mathcal{S}^{\rho_1}_{\rho_2}(\mathbb{R})$ for any $\rho_1\geq 0$.

\end{example}

\begin{remark}
\label{rknoMRArho_2<1} Let $\rho_{2}\leq 1$. We show that there is no MRA with scaling function $\phi\in \mathcal{S}_{\rho_2}^{\rho_1}(\mathbb{R}^{d})$. Indeed, as soon as a scaling function $\phi\in \mathcal{S}_{\rho_2}^{\rho_1}(\mathbb{R}^{d})\subset\mathcal{S}(\mathbb{R}^{d})$, one can readily verify that the function $\kappa(x)=\sum_{k\in\mathbb{Z}^{d}} \bar{\phi}(-k)\phi(x-k)$  belongs to $\mathcal{S}_{\rho_2}^{\rho_1}(\mathbb{R}^{d})$ (see e.g. Lemma \ref{projE} below). The Fourier transform of $\kappa$ automatically satisfies $\hat\kappa(0)=1$ and $\partial^{\alpha}\hat{\kappa}(0)=0$ for any nonzero multi-index $\alpha$, as follows from \cite[Theorem~4, p.~33]{Mey}. If $\rho_2$ were smaller than or equal to 1,  the function $\hat{\kappa}\in \mathcal{S}^{\rho_2}_{\rho_1}(\mathbb{R}^{d})$ would be identically equal to  1 due to analyticity of the elements of $\mathcal{S}^{\rho_2}_{\rho_1}(\mathbb{R}^{d})$ when $\rho_2\leq 1$, which is of course impossible.
\end{remark}

\section{Converge of multiresolution expansions in Gelfand-Shilov spaces} \label{Sec3}

The goal of this section is to study converge results in Gelfand-Shilov spaces for multiresolution expansions.  So, we now focus on approximation properties of $(\rho_1,\rho_2)$-regular MRA.

We need to fix some notation. Given an MRA $\{V_m\}_{m\in\mathbb{Z}}$, the orthogonal projection $L^{2}(\mathbb{R}^{d})\to V_0$ is denoted as $q_0$; it is in fact determined by its kernel
\begin{equation} \label{Eq:E_0}
q_0(x, y) = \sum_{k \in \mathbb{Z}^d} \phi(x - k) \bar{\phi}(y - k), \qquad x, y \in \mathbb{R}^d,
\end{equation}
i.e.,
$$
(q_0 f)(x) = \langle f(y), q_0(x, y) \rangle = \int_{\mathbb{R}^d}
f(y) q_0 (x, y) \, dy, \qquad x \in \mathbb{R}^d.
$$
Then, the orthogonal projection $ q_m: L^2(\mathbb{R}^d) \to V_m $ is given by
\begin{equation}
\label{q_mprojection}
(q_m f)(x) = \langle f(y), q_m (x, y) \rangle = \int_{\mathbb{R}^d}
f(y) q_m (x, y) \, dy, \qquad x \in \mathbb{R}^d,
\end{equation}
where
$
q_m (x, y) = 2^{m d} q_0(2^m x, 2^m y).
$
The sequence $ \{q_m f\}_{m \in \mathbb{Z}} $ is called the multiresolution expansion of $ f. $ We anticipate that Lemma~\ref{projE} below tells us that for each fixed $x$ the kernels of the orthogonal projections of a $(\rho_1,\rho_2)$-regular MRA satisfy  $q_m(x,\:\cdot\:)\in\mathcal{S}_{\rho_2}^{\rho_1}(\mathbb{R}^{d})$. This allows one to define each $q_mf$ even for an ultradistribution $f\in(\mathcal{S}_{\rho_2}^{\rho_1}(\mathbb{R}^{d}))'$ via the dual pairing in the formula \eqref{q_mprojection}.

Our main result concerning multiresolution expansions is the following theorem. Notice that there is a loss of regularity with respect to ultradifferentiability in this convergence result, which is quantitatively measured by the parameter $\sigma$ below. Interestingly, the same phenomenon shows up in Section \ref{SecWaveletGS} for convergence wavelet series or in the study of continuity of wavelet transforms in Gelfand-Shilov spaces \cite{PRTV}.

\begin{theorem} \label{th conv MRA rho regular} Let $ \rho_2>1$, $ \rho_1\geq 0,$  and let $\{V_m\}_{m\in\mathbb{Z}}$ be a $ (\rho_1, \rho_2)$-regular MRA. Set $\sigma=\rho_1+\rho_2-1$ and let $s\geq \sigma$ and $t\geq \rho_2$.
\begin{enumerate}
\item [(i)]
If  $ \varphi \in \mathcal{S}^{s - \sigma}_{t} (\mathbb{R}^d),$ then
$$
\lim_{m \to \infty} q_m \varphi = \varphi \; \mbox{ in }
\mathcal{S}^{s}_{t} (\mathbb{R}^d).
$$

\item [(ii)] If  $ f \in (\mathcal{S}^{s }_{t})' (\mathbb{R}^d),$ then
$$
\lim_{m \to \infty} q_m f = f \; \mbox{ in }
(\mathcal{S}^{s-\sigma}_{t} (\mathbb{R}^d))'.
$$
\end{enumerate}
\end{theorem}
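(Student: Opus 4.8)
The plan is to adapt Meyer's kernel method \cite[Section~2.6]{Mey} to the Gelfand--Shilov framework. I would first record two ingredients. \emph{(a) A joint regularity--decay estimate for the projection kernels:} differentiating \eqref{Eq:E_0} term by term and inserting the bound $|\phi^{(\beta)}(x)|\lesssim A^{|\beta|}\beta!^{\rho_1}e^{-c|x|^{1/\rho_2}}$, valid for every $c>0$ with $A=A(c)$ by the equivalent norm \eqref{norma-GS2} on $\mathcal{S}^{\rho_1}_{\rho_2}(\mathbb{R}^d)$, then using subadditivity of $r\mapsto r^{1/\rho_2}$ (legitimate since $\rho_2>1$) and the boundedness of $\sum_{k}e^{-c|x-k|^{1/\rho_2}}$, one obtains $|\partial_x^{\beta}q_0(x,y)|\lesssim A^{|\beta|}\beta!^{\rho_1}e^{-c|x-y|^{1/\rho_2}}$ for every $c>0$; since $\partial_x^{\beta}q_m(x,y)=2^{md}2^{m|\beta|}(\partial_1^{\beta}q_0)(2^m x,2^m y)$, the same holds for $q_m$ up to the factor $2^{md}2^{m|\beta|}$. \emph{(b) Polynomial reproduction:} as an $(\rho_1,\rho_2)$-regular MRA is $r$-regular for every $r\in\mathbb{N}$, Meyer's identity $\int_{\mathbb{R}^d}q_0(x,y)y^{\gamma}\,dy=x^{\gamma}$ holds for all $\gamma\in\mathbb{N}^d$; rescaling and differentiating under the integral (justified by (a)) gives $\int_{\mathbb{R}^d}(\partial_1^{\beta}q_m)(x,y)\,y^{\gamma}\,dy=\partial^{\beta}(x^{\gamma})$.

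The heart of the argument is then the identity: for $N\ge|\beta|$, Taylor-expanding $\varphi$ around $x$ to order $N$ and applying (b) so that every Taylor-polynomial term reproduces the corresponding derivative and cancels against $\varphi^{(\beta)}(x)$,
\[
\partial^{\beta}\bigl((q_m\varphi)-\varphi\bigr)(x)=\int_{\mathbb{R}^d}(\partial_1^{\beta}q_m)(x,y)\,R_{N+1}(x,y)\,dy ,
\]
with $R_{N+1}$ the order-$(N+1)$ Taylor remainder in integral form. I would bound $\partial_1^{\beta}q_m$ by (a), bound $|R_{N+1}(x,y)|$ using $|\varphi^{(\alpha)}(z)|\lesssim h_1^{-|\alpha|}\alpha!^{\,s-\sigma}e^{-c_1|z|^{1/t}}$ together with $|x+\theta(y-x)|^{1/t}\ge|x|^{1/t}-|y-x|^{1/t}$, and then substitute $w=2^m(x-y)$: the Jacobian $2^{md}$ cancels, the factor $2^{m|\beta|}$ from (a) pairs with $2^{-m(N+1)}$ from the rescaled remainder, and $\int_{\mathbb{R}^d}e^{-c_0|w|^{1/\rho_2}}|w|^{N+1}\,dw\lesssim R^{N+1}(N!)^{\rho_2}$ by a Gamma-function/Stirling estimate, available once $c_0>c_1$, which is permitted precisely because $\phi$ has decay $e^{-c|x|^{1/\rho_2}}$ for \emph{every} $c>0$. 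Up to harmless polynomial-in-$N$ factors this yields
\[
\bigl|\partial^{\beta}\bigl((q_m\varphi)-\varphi\bigr)(x)\bigr|\lesssim \beta!^{\rho_1}\,(2^{m}A)^{|\beta|}\,(B2^{-m})^{N+1}\,(N+1)!^{\,s-\sigma-1}(N!)^{\rho_2}\,e^{-c_1|x|^{1/t}}.
\]

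Everything then hinges on the choice of $N$. The key arithmetic is $s-\sigma-1+\rho_2=s-\rho_1$, so $(N+1)!^{\,s-\sigma-1}(N!)^{\rho_2}\le(N+1)!^{\,s-\rho_1}$, and $s-\rho_1\ge\rho_2-1>0$. I would set $N+1=|\beta|+K$ with $K=K(m)\in\mathbb{N}$: using $(|\beta|+K)!^{\,s-\rho_1}\le C^{\,|\beta|+K}|\beta|!^{\,s-\rho_1}K!^{\,s-\rho_1}$ and $|\beta|!\le d^{|\beta|}\beta!$, the dilation factor $2^{m|\beta|}$ is exactly absorbed by the $2^{-m|\beta|}$ inside $(B2^{-m})^{|\beta|+K}$, and after fixing $h$ small the whole $|\beta|$-dependent part is $\lesssim\beta!^{\,s-\rho_1}$; what remains is $(B'2^{-m})^{K}K!^{\,s-\rho_1}$, minimized over $K\asymp 2^{m/(s-\rho_1)}$ to a quantity decaying doubly-exponentially in $m$ (take $K=1$ for the finitely many small $m$). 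As $\beta!^{\rho_1}\beta!^{\,s-\rho_1}=\beta!^{s}$ is exactly what the seminorm \eqref{norma-GS2} with parameters $s,t$ tolerates, this gives $p_{h,c_1}^{s,t}(q_m\varphi-\varphi)\to0$, proving (i). Part (ii) follows by duality: for $\varphi\in\mathcal{S}^{s-\sigma}_t(\mathbb{R}^d)$ and $f\in(\mathcal{S}^{s}_t(\mathbb{R}^d))'$ a Fubini argument (legitimate by the joint estimates on $q_m$) gives $\langle q_m f,\varphi\rangle=\langle f,\widetilde q_m\varphi\rangle$, where $\widetilde q_m$ has kernel $q_m(y,x)$ and is the projection of the MRA with scaling function $\overline{\phi}\in\mathcal{S}^{\rho_1}_{\rho_2}(\mathbb{R}^d)$; applying (i) to $\widetilde q_m$ yields $\widetilde q_m\varphi\to\varphi$ in $\mathcal{S}^{s}_t(\mathbb{R}^d)$, hence $\langle q_m f,\varphi\rangle\to\langle f,\varphi\rangle$, and this weak-$*$ convergence upgrades to convergence in $(\mathcal{S}^{s-\sigma}_t(\mathbb{R}^d))'$ because that space is Fréchet--Montel.

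I expect the main obstacle to be this last balancing act: the $\beta$-dependent truncation level $N=|\beta|+K(m)$ must be large enough in $m$ to produce decay, yet tied to $|\beta|$ so that $2^{m|\beta|}$ cancels and the residual Gevrey growth stays within $\beta!^{s}$; it is precisely this constraint --- fed by the $\beta!^{\rho_1}$ in (a) and the $(N!)^{\rho_2}$ from integrating the subexponential weight --- that forces the loss $\sigma=\rho_1+\rho_2-1$. The subsidiary technical points are the term-by-term differentiations, the reproduction identity for polynomials (which are not in $L^2$), and the Gamma-function estimate against $e^{-c_0|w|^{1/\rho_2}}$.
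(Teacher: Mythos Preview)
Your approach is correct and genuinely different from the paper's. Both adapt Meyer's kernel method, but the implementations diverge at the key step. The paper compares $q_m$ with convolution by an auxiliary mollifier $\eta_m$, invokes a technical ``primitives lemma'' (their Lemma~\ref{LemmaIntegrationMRA}, the Gelfand--Shilov analog of \cite[Lemma~12, p.~40]{Mey}) to rewrite $\partial^{\alpha}\circ q_0=\mathcal{C}\circ\partial^{\alpha}+\sum_{|\beta|=|\alpha|}\mathcal{R}^{\alpha,\beta}\circ\partial^{\alpha}$, and then only proves \emph{boundedness} of $\{q_m\varphi\}$ in $\mathcal{S}^{s}_{t}$, concluding convergence via the Montel property (their Lemma~\ref{conv1}). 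You instead exploit polynomial reproduction directly through a Taylor expansion with $\beta$-dependent truncation $N+1=|\beta|+K(m)$, obtaining an explicit bound $p^{s,t}_{h,c_1}(q_m\varphi-\varphi)\lesssim (B'2^{-m})^{K}K!^{\,s-\rho_1}\to 0$. Your route is more elementary (no primitives lemma, no auxiliary $\eta$, no Montel argument for part~(i)) and yields a quantitative rate; the paper's route isolates the reusable operator identity and the primitives lemma, which are of independent interest. Both produce the same loss $\sigma=\rho_1+\rho_2-1$ for the same underlying reason: $\beta!^{\rho_1}$ from kernel regularity combines with the $(N!)^{\rho_2}$ coming from integrating moments against $e^{-c|w|^{1/\rho_2}}$.

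One correction: your claim that $|\phi^{(\beta)}(x)|\lesssim A(c)^{|\beta|}\beta!^{\rho_1}e^{-c|x|^{1/\rho_2}}$ holds \emph{for every} $c>0$ is false for the Roumieu space $\mathcal{S}^{\rho_1}_{\rho_2}$; membership only guarantees this for \emph{some} $c>0$ (the ``for all $c$'' version is the Beurling space). Fortunately your argument does not actually need it. After the substitution $w=2^m(x-y)$ the troublesome factor becomes $e^{c_1 2^{-m/t}|w|^{1/t}}\le e^{c_1}e^{c_1 2^{-m/t}|w|^{1/\rho_2}}$ (using $t\ge\rho_2$), and for all $m$ large enough that $c_1 2^{-m/t}<c_0/2$ this is absorbed by the fixed kernel decay $e^{-c_0|w|^{1/\rho_2}}$, giving the Gamma estimate with constants independent of $m$. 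So replace your justification ``permitted precisely because $\phi$ has decay for every $c$'' by this observation. Your treatment of part~(ii) via the conjugate MRA with scaling function $\overline{\phi}$ is in fact more careful than the paper's, which simply writes $\langle q_m f,\varphi\rangle=\langle f,q_m\varphi\rangle$.
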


The rest of the section is devoted to supplying a proof of Theorem \ref{th conv MRA rho regular}. In preparation, we need some auxiliary results. We closely follow Meyer's method as explained in \cite[pp. 39--41]{Mey}, but making the necessary adjustments for the context of Gelfand-Shilov spaces.

The next lemma is very simple but useful. It characterizes convergence of sequences in Gelfand-Shilov spaces.
\begin{lemma} \label{conv1} Let $ \{ \varphi_n \}_{n \in \mathbb{N}} $ be a sequence of
elements of $ \mathcal{S}^{s}_{t} (\mathbb{R}^d). $ Then, $ \{
\varphi_n \}_{n \in \mathbb{N}} $ converges in $
\mathcal{S}^{s}_{t} (\mathbb{R}^d) $ if and only if $ \{ \varphi_n
\}_{n \in \mathbb{N}} $ is bounded in $ \mathcal{S}^{s}_{t}
(\mathbb{R}^d)$ and it is pointwise convergent.
\end{lemma}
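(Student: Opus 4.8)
The plan is to exploit the fact, recalled above, that $\mathcal{S}^{s}_{t}(\mathbb{R}^{d})$ is a Montel (DFS)-space, together with a density/separation argument to pin down the limit. The forward implication is trivial: a convergent sequence is bounded, and convergence in the (DFS)-topology is in particular stronger than pointwise convergence (since point evaluations $\varphi\mapsto\varphi(x_{0})$ are continuous linear functionals on $\mathcal{S}^{s}_{t}(\mathbb{R}^{d})$, as they factor through any of the defining norms). So the content is the reverse implication.

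First I would observe that in a Montel space every bounded set is relatively compact; hence if $\{\varphi_{n}\}_{n}$ is bounded in $\mathcal{S}^{s}_{t}(\mathbb{R}^{d})$, then every subsequence has a further subsequence converging in $\mathcal{S}^{s}_{t}(\mathbb{R}^{d})$ to some element. Next I would use pointwise convergence to identify all such subsequential limits: if $\varphi_{n_{k}}\to\varphi$ in $\mathcal{S}^{s}_{t}(\mathbb{R}^{d})$, then in particular $\varphi_{n_{k}}(x)\to\varphi(x)$ for every $x$, but by hypothesis $\varphi_{n}(x)$ converges pointwise, say to $\Psi(x)$; thus $\varphi=\Psi$ regardless of which subsequence we picked. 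So every convergent subsequence of $\{\varphi_{n}\}_{n}$ has the same limit $\Psi\in\mathcal{S}^{s}_{t}(\mathbb{R}^{d})$. Finally, a standard argument shows that a sequence in a topological space all of whose subsequences admit a further subsequence converging to one fixed point must itself converge to that point: if not, some neighborhood $U$ of $\Psi$ is missed by infinitely many $\varphi_{n}$, and that subsequence can have no subsequence converging to $\Psi$, a contradiction. Therefore $\varphi_{n}\to\Psi$ in $\mathcal{S}^{s}_{t}(\mathbb{R}^{d})$.

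The only delicate point to spell out is the identification step: a priori the pointwise limit $\Psi$ is just a function on $\mathbb{R}^{d}$, but once we know one subsequence converges in $\mathcal{S}^{s}_{t}(\mathbb{R}^{d})$ to a genuine element of that space whose values agree with $\Psi$, we conclude $\Psi\in\mathcal{S}^{s}_{t}(\mathbb{R}^{d})$ for free. There is no real obstacle here; the lemma is essentially a soft consequence of the Montel property, and the proof is short. (One should note the Montel property was explicitly recorded in Section \ref{Sec1}, so it may be invoked directly.)
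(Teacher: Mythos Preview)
Your proposal is correct and follows essentially the same route as the paper: both invoke the Montel property of $\mathcal{S}^{s}_{t}(\mathbb{R}^{d})$ to obtain relative compactness of the bounded sequence, then use pointwise convergence to show that all subsequential limits coincide, forcing convergence of the full sequence. Your write-up is slightly more explicit about the subsequence-of-subsequence argument, but the underlying idea is identical.
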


\begin{proof} The direct implication is of course trivial. Conversely, let $\varphi$ be the pointwise limit of the sequence.  As we have already pointed out in Section \ref{Sec1}, the space $ \mathcal{S}^{s}_{t} (\mathbb{R}^d)$ is Montel; in particular, it has the Heine-Borel property. Thus, being bounded, the set $\{\varphi_n: \: n \in \mathbb{N}\} $ is relatively compact in $ \mathcal{S}^{s}_{t} (\mathbb{R}^d)$. It therefore suffices to verify that the sequence has a unique accumulation point. But if for some subsequence $\varphi_{n_k}\to \eta $ in $ \mathcal{S}^{s}_{t} (\mathbb{R}^d)$, we must necessarily have $\eta=\varphi$, due to the pointwise convergence hypothesis. This completes the proof of the lemma.
\end{proof}

We need the following regularity property of the kernel $q_{0}(x,y)$.

\begin{lemma} \label{projE}
Given a $ (\rho_1, \rho_2)$-regular MRA, the reproducing kernel $q_0$ of $V_0$ satisfies the ensuing bounds: there exist constants $ c > 0 $ and  $ h > 0 $ such that
\begin{equation} \label{E-decay}
|\partial_x^{\alpha} \partial_y^{\beta} q_0(x, y)| \lesssim
h^{|\alpha + \beta|} \alpha!^{\rho_1} \beta!^{\rho_1}  \, e^{-c |x -
y|^{\frac{1}{\rho_2}}}, \qquad \alpha, \beta \in \mathbb{N}^d, \; x, y
\in \mathbb{R}^d.
\end{equation}
\end{lemma}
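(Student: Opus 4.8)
The plan is to exploit the two equivalent descriptions of Gelfand--Shilov regularity: the derivative bound $|\partial^\beta\phi(x)|\lesssim h^{-|\beta|}\beta!^{\rho_1}e^{-c|x|^{1/\rho_2}}$ coming from the seminorms \eqref{norma-GS2}, together with the defining weighted-derivative bound. Since $q_0(x,y)=\sum_{k\in\mathbb Z^d}\phi(x-k)\bar\phi(y-k)$, differentiating term by term gives
\[
\partial_x^\alpha\partial_y^\beta q_0(x,y)=\sum_{k\in\mathbb Z^d}\phi^{(\alpha)}(x-k)\,\overline{\phi^{(\beta)}(y-k)},
\]
so the whole matter reduces to controlling this lattice sum. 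The first step is therefore to justify that the series and all its term-by-term derivatives converge absolutely and uniformly on compact sets — this is immediate from the subexponential decay of $\phi$ and its derivatives — so that $q_0$ is smooth and the displayed identity is legitimate.

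Next I would extract, from $\phi\in\mathcal S_{\rho_2}^{\rho_1}(\mathbb R^d)$, a uniform estimate of the form $|\phi^{(\beta)}(x)|\lesssim H^{|\beta|}\beta!^{\rho_1}\,e^{-2c|x|^{1/\rho_2}}$ for suitable $H,c>0$, valid for all $\beta$ and $x$, using the equivalent family \eqref{norma-GS2} (here I use $\rho_2>0$, which holds since $\rho_2>1$). Plugging this in, one gets
\[
|\partial_x^\alpha\partial_y^\beta q_0(x,y)|\lesssim H^{|\alpha+\beta|}\alpha!^{\rho_1}\beta!^{\rho_1}\sum_{k\in\mathbb Z^d} e^{-2c|x-k|^{1/\rho_2}}e^{-2c|y-k|^{1/\rho_2}}.
\]
The remaining task is a purely elementary estimate of the scalar sum $S(x,y)=\sum_{k\in\mathbb Z^d}e^{-2c(|x-k|^{1/\rho_2}+|y-k|^{1/\rho_2})}$: I claim $S(x,y)\lesssim e^{-c|x-y|^{1/\rho_2}}$ with a constant independent of $x,y$. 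The key inequality is the concavity/subadditivity type bound for $r\mapsto r^{1/\rho_2}$ (valid since $1/\rho_2<1$), namely $|x-y|^{1/\rho_2}\le (|x-k|+|y-k|)^{1/\rho_2}\le |x-k|^{1/\rho_2}+|y-k|^{1/\rho_2}$, so that $e^{-c|x-y|^{1/\rho_2}}\ge e^{-c(|x-k|^{1/\rho_2}+|y-k|^{1/\rho_2})}$ termwise; using this to absorb one factor of $c$ leaves $S(x,y)\lesssim e^{-c|x-y|^{1/\rho_2}}\sum_k e^{-c(|x-k|^{1/\rho_2}+|y-k|^{1/\rho_2})}$, and the residual sum is bounded by $\sum_k e^{-c|x-k|^{1/\rho_2}}$, which is finite uniformly in $x$ by comparison with the integral $\int_{\mathbb R^d}e^{-c|\xi|^{1/\rho_2}}\,d\xi<\infty$. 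Combining the three displays yields \eqref{E-decay} with the stated $\alpha!^{\rho_1}\beta!^{\rho_1}$ factors and some $h,c>0$.

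The main obstacle is the scalar sum estimate — specifically, producing the decay factor $e^{-c|x-y|^{1/\rho_2}}$ in $|x-y|$ rather than just boundedness of $q_0$, since each individual term only decays in $x-k$ and $y-k$ separately. The device above (split the exponent, use subadditivity of $t^{1/\rho_2}$ to pull out the cross term, bound the leftover geometric-type sum uniformly) is standard but must be set up carefully; one should also note that for the residual sum one can afford to keep, say, half of the exponent for the pull-out and half for the summation, which is why I introduced the factor $2c$ at the start. Everything else is bookkeeping: the interchange of differentiation and summation, and the observation that $\rho_2>1$ guarantees both $1/\rho_2<1$ (for subadditivity) and integrability of the Gaussian-type weight.
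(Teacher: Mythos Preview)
Your proposal is correct and follows essentially the same approach as the paper: differentiate the kernel term by term using $\phi\in\mathcal{S}^{\rho_1}_{\rho_2}(\mathbb{R}^d)$ to reduce to the lattice sum $\sum_{k}e^{-c|x-k|^{1/\rho_2}-c|y-k|^{1/\rho_2}}$, then show this sum is $\lesssim e^{-c'|x-y|^{1/\rho_2}}$. The only difference is cosmetic: the paper dispatches the sum by invoking \cite[Lemma~3.12, p.~220]{HW} (a general estimate for decreasing radial $L^1$ weights), whereas you prove it directly via the subadditivity of $t\mapsto t^{1/\rho_2}$ and an integral comparison---which is in fact how that lemma is proved.
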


\begin{proof} Since $ \phi \in \mathcal{S}^{\rho_1}_{\rho_2} (\mathbb{R}^d), $ by \eqref{Eq:E_0},
we have that, for some $ h, c > 0,$
$$
|\partial_x^{\alpha} \partial_y^{\beta} q_0(x, y)| \lesssim
h^{|\alpha + \beta|} \alpha!^{\rho_1} \beta!^{\rho_1} \sum_{k \in \mathbb{Z}^{d}} e^{-c |x - k|^{\frac{1}{\rho_2}} -c |y - k|^{\frac{1}{\rho_2}}},
$$
for $ \alpha, \beta \in \mathbb{N}^d, \; x, y \in \mathbb{R}^d. $
 Now, since $ W(|x|) = e^{-c |x|^{1/\rho_2}} \in L^1 ([0, \infty)) $ is a decreasing function and $ W(0)<\infty,$ by
(a straightforward $d$-dimensional extension of) \cite[Lemma 3.12, p. 220]{HW} it follows that
$$
\sum_{k \in \mathbb{Z}^{d}} e^{-c |x - k|^{\frac{1}{\rho_2}} -c |y - k|^{\frac{1}{\rho_2}}} \lesssim e^{-\frac{c}{2} |x - y|^{\frac{1}{\rho_2}}},
$$
so that \eqref{E-decay} holds true.

\end{proof}

The ensuing lemma is the key to our adaptation of Meyer's method. It is an analog of \cite[Lemma~12, p.~40]{Mey} and provides a uniform estimate for primitives of functions with a number vanishing moments and satisfying a priori decay property.

\begin{lemma} \label{LemmaIntegrationMRA} Let $\rho>1$, $r\in\mathbb{N}$, and let $g\in C(\mathbb{R}^{d})$ be such that $\sup_{y\in\mathbb{R}^{d}}|g(y)|e^{c|y|^{1/\rho}}<\infty$, for some $c>0$, and $\int_{\mathbb{R}^{d}} y^{\alpha}g(y)dy=0$ for each $|\alpha|\leq r$. Then, there are absolute constants $C,h>0$ (only depending on the values of $c$, $\rho$, and the dimension) such that for any such $g$ and $r$ one can find functions $g_{\beta}\in C^{r}(\mathbb{R}^{d})$, $|\beta|=r$, for which

\begin{equation}\label{MRAeq int1} \displaystyle g = \sum_{|\beta| = r} \partial^{\beta} g_{\beta},
\end{equation}
$\int_{\mathbb{R}^{d}}g_{\beta}(y)dy=0$, and
\begin{equation}\label{MRAeq int2}
|g_{\beta} (y)| \leq C r!^{\rho-1} h^{r}
e^{-\frac{c }{2^{d}}|y|^{\frac{1}{\rho}}}\sup_{u\in\mathbb{R}^{d}}|g(u)|e^{c|u|^{\frac{1}{\rho}}}.
\end{equation}
\end{lemma}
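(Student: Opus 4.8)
The plan is to construct the primitives $g_\beta$ by an explicit iterated integration scheme, one coordinate at a time, exploiting the vanishing of moments to keep each successive primitive integrable with a comparable decay rate. I would proceed by induction on $r$, but it is cleanest to think of it as an induction on the dimension-augmented procedure: reduce the case of order $r$ to the case of order $r-1$ by writing $g$ as a divergence of functions each of which still has $r-1$ vanishing moments and still satisfies the subexponential decay estimate, with a controlled loss in the constants. The base case $r=0$ is trivial ($g_0=g$). Concretely, for the inductive step I would first reduce to extracting a single derivative: given $g$ with all moments up to order $r$ vanishing and decay $e^{-c|y|^{1/\rho}}$, produce for each $j=1,\dots,d$ a function $\tilde g_j$ with $g = \sum_j \partial_{y_j}\tilde g_j$, such that each $\tilde g_j$ has all moments up to order $r-1$ vanishing, has zero integral, and satisfies a bound $|\tilde g_j(y)|\le C' h' e^{-c'|y|^{1/\rho}}\sup|g|e^{c|u|^{1/\rho}}$ with $c'$ a fixed fraction of $c$ (the $2^d$ in the statement is the accumulated loss after iterating the one-dimensional construction through all $d$ coordinates). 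Iterating this step $r$ times and collecting multi-indices yields \eqref{MRAeq int1}, and the factor $r!^{\rho-1}$ will emerge from the product of the combinatorial/analytic losses at each of the $r$ stages — this is exactly the Gevrey-type bookkeeping, where each integration costs a factor that grows polynomially in the stage number, and $\prod_{k=1}^r (\text{poly}(k)) \asymp r!^{\,\text{const}}$, with the constant tuned to $\rho-1$.

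For the single-derivative reduction I would use the standard device of writing, in the first coordinate,
$$
g_1(y_1,y') = \int_{-\infty}^{y_1} \Big( g(s,y') - \theta(s)\!\!\int_{\mathbb{R}} g(u,y')\,du \Big)\, ds,
$$
where $\theta$ is a fixed, smooth, compactly supported (or rapidly decaying) bump of integral one; the vanishing of $\int_{\mathbb R} g(u,y')du$ for $y'$ is not available in general, so instead one tracks what happens to the $y'$-moments. Actually the more efficient route, and the one matching Meyer's Lemma 12, is: since $\int_{\mathbb R^d} g\,dy = 0$ one can write $g = \partial_{y_1} h_1 + (\text{remainder supported via a bump})$ and iterate coordinatewise; the precise recipe is to peel off one primitive in $y_1$ after subtracting a correction $\theta_1(y_1)\psi_1(y')$ chosen so the $y_1$-primitive decays, then handle $\psi_1$ (which inherits $d-1$-dimensional vanishing-moment structure) in the remaining coordinates recursively. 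I would set this up so that each primitive in a single variable of a function decaying like $e^{-c|y|^{1/\rho}}$ and having vanishing integral in that variable again decays like $e^{-c''|y|^{1/\rho}}$ — this uses $\rho>1$ crucially, since for $\rho=1$ the primitive of an integrable exponentially-decaying function with zero integral decays exponentially but the constant in the exponent cannot be preserved, whereas for subexponential weights $e^{-c|y|^{1/\rho}}$ with $\rho>1$ one genuinely loses only a constant factor in $c$ at each step (one uses that $e^{-c|s|^{1/\rho}}$ integrated from $y_1$ to $\infty$ is $\lesssim |y_1|^{1-1/\rho} e^{-c|y_1|^{1/\rho}} \lesssim e^{-(c/2)|y_1|^{1/\rho}}$, and analogously in the other variables, losing a factor $2$ per coordinate, giving $2^d$ after one full pass). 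The zero-integral normalization of each $g_\beta$ can be arranged because having the primitive vanish at $-\infty$ and the integrand have zero integral forces the primitive to vanish at $+\infty$ too; integrating once more in a new variable then requires its own correction bump, which is where further moment conditions get consumed.

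The main obstacle, I expect, is keeping simultaneous and sharp control of three things at once through the iteration: (a) the number of surviving vanishing moments (must drop by exactly one per peeled derivative, no more, which forces a careful choice of the correction bumps $\theta$ so they themselves have enough vanishing moments or, conversely, are handled separately), (b) the subexponential decay constant $c$ (must degrade only by a bounded multiplicative factor, totalling $2^{-d}$, which is the delicate estimate on primitives of $e^{-c|y|^{1/\rho}}$-weighted functions and is where $\rho>1$ is used), and (c) the growth of the implicit constant and especially the emergence of the $r!^{\rho-1}$ factor rather than, say, $r!^{\rho}$ or $(r!)^{\text{something bigger}}$. Point (c) is genuinely the crux: at each of the $r$ stages one picks up a polynomial-in-$r$ factor from the correction bumps' normalization and from the decay-constant adjustment $|y_1|^{1-1/\rho}\lesssim_{\text{stage}} (\text{const})^{\text{stage}}e^{(\text{small})|y_1|^{1/\rho}}$, and one must check that the product of these over $r$ stages is $\le C h^r r!^{\rho-1}$; this is a Stirling-type estimate, and getting the exponent to be exactly $\rho-1$ (and not worse) requires that the per-stage loss be $\asymp k^{\rho-1}$ at stage $k$, which in turn traces back to the precise bound $\int_{y_1}^\infty (\text{stuff})\le (\text{stuff})\cdot k^{\rho-1}$ when one distributes the polynomial prefactor $|y_1|^{1-1/\rho}$ into the exponential while respecting that $g$ itself, or rather the already-differentiated pieces, do not carry their own Gevrey weights (the hypothesis is only $C(\mathbb R^d)$ decay, no derivative bounds), so all the Gevrey growth is produced by the integration procedure alone. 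Once this bookkeeping is pinned down, \eqref{MRAeq int2} follows by tracking the constants, and the absoluteness of $C,h$ (dependence only on $c,\rho,d$) is clear since the construction of $\theta$ and all the elementary estimates are chosen once and for all.
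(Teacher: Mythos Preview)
Your strategy—induction on $r$, peeling off one derivative at a time via a divergence $g=\sum_j\partial_{y_j}\tilde g_j$—is genuinely different from the paper's, which inducts on the \emph{dimension}. In one dimension the paper takes all $r$ primitives at once via the Cauchy repeated-integration formula
\[
g_r(y)=\frac{1}{(r-1)!}\int_{-\infty}^{y}(y-w)^{r-1}g(w)\,dw=-\frac{1}{(r-1)!}\int_{y}^{\infty}(y-w)^{r-1}g(w)\,dw,
\]
the two expressions agreeing precisely because all moments up to order $r-1$ vanish; a single estimate $\int_0^\infty u^{r-1}e^{-(c/2)u^{1/\rho}}\,du\lesssim (r-1)!^{\rho}\,h^{r}$ then produces the factor $r!^{\rho-1}$ in one shot, with the exponent dropping from $c$ to $c/2$ \emph{independently of $r$}. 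For $d\to d+1$ the paper fixes once and for all an $\eta\in\mathcal{S}^0_{\rho'}(\mathbb{R})$ with $\int\eta=1$ and \emph{all} higher moments vanishing, subtracts $\sum_{j=0}^{r}\frac{(-1)^j}{j!}\eta^{(j)}(y_{d+1})\int w^j g(y,w)\,dw$ to kill the $y_{d+1}$-moments of the remainder, applies the one-dimensional case in $y_{d+1}$, and feeds the subtracted pieces into the $d$-dimensional induction hypothesis. No moment-consuming correction bumps are needed, and the exponent $c/2^{d}$ emerges with no $r$-dependence.

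Your scheme can be made to work, but as written there is a real gap in item~(b). You say each single-derivative extraction replaces $c$ by ``a fixed fraction of $c$,'' and that a pass through the $d$ coordinates costs $2^d$; but you then iterate $r$ times, so taken literally the final exponent would be $c\cdot(\text{fraction})^{r}$, which depends on $r$ and destroys \eqref{MRAeq int2} (and with it the uniformity in $\alpha$ that Theorem~\ref{th conv MRA rho regular} needs). The fix is \emph{not} to absorb the polynomial prefactor $|y|^{1-1/\rho}$ into the exponential at each stage but to carry it: after $r$ one-variable integrations the accumulated prefactor has degree $r(1-1/\rho)$, and absorbing it once at the end via $\sup_{t>0}t^{r(\rho-1)}e^{-(c/2)t}\asymp r!^{\rho-1}h^r$ gives the stated bound with exponent $c/2$. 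Equivalently, if you insist on absorbing stage by stage you must take per-stage exponent losses of size $O(c/r)$, not a fixed fraction, and then the per-stage multiplicative constant is $\asymp r^{\rho-1}$, whose product over $r$ stages is $r!^{\rho-1}$. Either refinement is absent from your proposal; the paper's all-at-once Cauchy formula is exactly the device that sidesteps this bookkeeping.
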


\begin{proof} We will show the theorem by induction on the dimension $d$. Let us thus first assume that $ d=1$, so that $\beta = r\geq 1$.
Let
\begin{align*}
g_r (y)  = - \frac{1}{(r - 1)!} \int_y^{\infty} (y -
w)^{r - 1} g (w) \, dw = \frac{1}{(r - 1)!} \int_{-\infty}^y (y - w)^{r - 1} g
(w) \, dw,
\end{align*}
where the latter equality follows from the vanishing moment property of $g.$ Here above, and in what follows, we take $  -\int_y^{\infty}, $ when
$ y > 0, $ and $  \int_{-\infty}^y, $ when $ y < 0. $ Clearly, \eqref{MRAeq int1} holds and $\int_{-\infty}^{\infty}g_{r}(y)dy=0$, as follows from Fubini's theorem and $\int_{-\infty}^{\infty}w^{r}g(w)\,dw=0$. It remains to establish the inequality \eqref{MRAeq int2}.
Assume that $ x > 0, $ the case $ x < 0 $ is analogous. We set $M=\sup_{u\in\mathbb{R}}|g(u)|e^{c|u|^{1/\rho}}.$  Then, we have
$$
|g_r (x)|  \leq  \frac{M}{(r-1)!}
\int_0^{\infty} u^{r - 1} e^{-c (u +
x)^{\frac{1}{\rho}}} \, du
\leq \frac{M}{(r-1)!} e^{-\frac{c}{2} x^{\frac{1}{\rho}}}
\int_0^{\infty} u^{r - 1} e^{-\frac{c}{2} u^{\frac{1}{\rho}}}
\, du.
$$
Now the change of variables $ (c/4)^{\rho} u = v $ gives
$$
\int_0^{\infty} u^{r - 1} e^{-\frac{c}{2} u^{\frac{1}{\rho}}} \, du
= \int_0^{\infty} u^{r - 1} e^{-2 (\frac{c^\rho}{4^\rho} u)^{\frac{1}{\rho}}} \, du
= \int_0^{\infty} \frac{4^{r\rho}}{c^{r\rho}} v^{r - 1} e^{-2 v^{\frac{1}{\rho}}} \, dv,
$$
so that
\begin{align*}
|g_r (x)| & \leq \frac{4^{r \rho} M}{c^{r  \rho } (r-1)!}
e^{-\frac{c}{2} x^{\frac{1}{\rho}}} \Big( \int_0^{\infty} e^{- v^{\frac{1}{\rho}}} \, dv \Big) \cdot
\Big( \sup_{v>0} v^{r- 1} e^{-v^{\frac{1}{\rho}}} \Big)
\\
& \leq C M\left(\frac{4^{\rho}\rho^{\rho}}{c^{\rho}}\right)^r (r-1)!^{\rho - 1} e^{-\frac{c}{2}
x^{\frac{1}{\rho}}},
\end{align*}
where we used that the growth order of $ \sup_{v} v^{r - 1}e ^{-v^{\frac{1}{\rho}}}$ is dominated by a constant multiple of $ \rho^{\rho r} (r-1)!^{\rho}$ (combine e.g. \cite[Inequality (3), p.~170]{GS} with Stirling's formula).

Assume now that the result holds for any dimension up to $d$ and let $g(y,y_{d+1})$ be a continuous function in the variables $(y,y_{d+1})\in\mathbb{R}^{d+1}$ satisfying the hypotheses of the lemma. For each $0\leq j\leq r$, define $g_{j}(y)=\int_{-\infty}^{\infty}w^{j}g(y,w)\: dw$, $y\in\mathbb{R}^{d}$.
Applying the induction hypothesis to each function $g_{j}$, we find functions $g_{\beta,j}\in C^{r-j}(\mathbb{R}^{d})$ such that
$g_{j}=\sum_{|\beta|=r-j}\partial^{\beta} g_{\beta,j}$,
$\int_{\mathbb{R}^{d}}g_{\beta,j}(y)dy=0$, and
\begin{align*}
 |g_{\beta,j} (y)| & \leq C_{d,\rho} (r-j)!^{\rho-1} h_{1}^{r-j}
e^{-\frac{c }{2^{d+1}}|y|^{\frac{1}{\rho}}}\sup_{x\in\mathbb{R}^{d}}e^{\frac{c}{2}|x|^{\frac{1}{\rho}}} \int_{-\infty}^{\infty}|w^{j}g(x,w)|\: dw
\\
 & \leq C_{d,\rho} (r-j)!^{\rho-1} h_{1}^{r-j} e^{-\frac{c }{2^{d+1}}|y|^{\frac{1}{\rho}}}
 \\
 &
 \qquad  \quad \quad  \times \left( \sup_{x \in\mathbb{R}^{d}}e^{\frac{c}{2}|x|^{\frac{1}{\rho}}} \int_{-\infty}^{\infty}|w|^{j} e^{-c|(x,w)|^{\frac{1}{\rho}}}\: dw\right) \left(\sup_{u\in\mathbb{R}^{d+1}}|g(u)|e^{c|u|^{\frac{1}{\rho}}}\right)
\\
 &
  \leq 2 C_{d,\rho} (r-j)!^{\rho-1} h_1^{r-j} e^{-\frac{c }{2^{d+1}}|y|^{\frac{1}{\rho}}} \left( \int_{0}^{\infty}w^{j} e^{-\frac{c}{2}w^{\frac{1}{\rho}}}\: dw\right)  \left(\sup_{u\in\mathbb{R}^{d+1}}|g(u)|e^{c|u|^{\frac{1}{\rho}}}\right)
\\
 &
 \leq C_{d+1,\rho} (r-j)!^{\rho-1} j!^{\rho} h^{r-j}
e^{-\frac{c}{2^{d+1}}|y|^{\frac{1}{\rho}}}\sup_{u\in\mathbb{R}^{d+1}}|g(u)|e^{c|u|^{\frac{1}{\rho}}}, \qquad y\in\mathbb{R}^{d},
\end{align*}
for some positive absolute constants $C_{d+1,\rho}$ and $h=h_{d+1,\rho,c}$.

Let $1<\rho'<\rho$ and let
$\eta\in\mathcal{S}^{0}_{\rho'}(\mathbb{R})=\mathcal{F}(\mathcal{D}^{\rho'}(\mathbb{R}))$ be such that
\begin{equation} \label{moments}
\int_{-\infty}^{\infty} \eta(u)du=1 \;\;\; \text{ and} \;\;\; \int_{-\infty}^{\infty}u^{m}\eta(u)du=0,
\end{equation}
for all positive integers $m$. Set
$$
\ell(y,y_{d+1})= g(y,y_{d+1})-\sum_{j=0}^{r}\frac{(-1)^{j}}{j!}\eta^{(j)}(y_{d+1})g_{j}(y).
$$
If $m=j$, after applying $m$-times integration by parts we obtain $\int_{-\infty}^{\infty} u^{m}\eta^{(j)}(u)\:du=(-1)^{m} m!$, and otherwise
$\int_{-\infty}^{\infty} u^{m}\eta^{(j)}(u)\:du=0$ whenever $m \neq j$, due to \eqref{moments}. Therefore, we conclude that
$\int_{-\infty}^{\infty}\ell(y,u)u^{j}\: du=0$ for each $0\leq j\leq r$. The one dimensional case gives a function $\tilde{g}\in C(\mathbb{R}^{d+1}) $ such that $\ell(y,y_{d+1})=\partial^{r}_{y_{d+1}} \tilde{g}(y,y_{d+1})$, $\int_{-\infty}^{\infty}\tilde{g}(y,u)\:du=0$, and (uniformly in $y\in\mathbb{R}^{d}$)
$$
|\tilde{g}(y,y_{d+1})|\leq \tilde{C}_{d+1,\rho} r!^{\rho-1} \tilde{h} ^{r}
e^{-\frac{c }{4}|(y,y_{d+1})|^{\frac{1}{\rho}}}\sup_{u\in\mathbb{R}^{d+1}}|g(u)|e^{c|u|^{1/\rho}}
, \qquad y\in\mathbb{R}^{d},
$$
for some positive absolute constants $\tilde{C}_{d+1,\rho}$ and $\tilde{h}=\tilde{h}_{d+1,\rho,c}$. We obtain the representation
\begin{align*}
g(y,y_{d+1})=\partial^{r}_{y_{d+1}}& (\tilde{g}(y,y_{d+1})+\frac{(-1)^{r}}{r!}g_{r}(y) \eta(y_{d+1}))
\\
&+ \sum_{j=0}^{r-1}\underset{\beta\in \mathbb{N}^{d}}{\sum_{|\beta|=r-j}}\partial^{(\beta,j)}_{(y,y_{d+1})} \left( \frac{(-1)^{j}}{j!}g_{\beta,j}(y)\eta(y_{d+1})\right),
\end{align*}
whence the result in the $d+1$ dimensional case immediately follows.

\end{proof}

We are ready to show Theorem \ref{th conv MRA rho regular}.

\begin{proof}[Proof of Theorem \ref{th conv MRA rho regular}] For (i), note that $q_m\varphi\to\varphi$ pointwise  (actually, the pointwise convergence holds under rather mild hypotheses even for a large class of distributions, cf. \cite{KosVin}). Thus, using Lemma \ref{conv1}, we only have to check that $\{q_m\varphi \}_{m\in\mathbb{N}}$ is a bounded sequence in $\mathcal{S}^{s}_{t} (\mathbb{R}^d)$.

Let $\eta\in \mathcal{S}^{\rho_1}_{\rho_2}(\mathbb{R}^{d})$ be an even function such that $\int_{\mathbb{R}^{d}}\eta(x)\:dx=1.$ We compare each orthogonal projection operator $q_m$ with the convolution operator with kernel $\eta_{m}(x)=2^{md}\eta(2^m x)$. For each $\alpha$, set
$$
R^{\alpha} (x,y) = \partial^{\alpha} _x q_0 (x,y) - \partial^{\alpha} _x  \eta (x-y).
$$
By Lemma \ref{projE} and since $\eta\in \mathcal{S}^{\rho_1}_{\rho_2}(\mathbb{R}^{d})$, the functions $R^{\alpha}(x,x+y)=\partial^{\alpha} _x q_0 (x,y+x) - \partial^{\alpha}  \eta (y)$ satisfy estimates (uniformly in  $x$)
$$
\sup_{y\in\mathbb{R}^{d}} e^{c_1 |y|^{\frac{1} {\rho_2}}}R^{\alpha}(x,x+y) \lesssim h_1^{|\alpha|} \alpha!^{\rho_1}.
$$
Since all moments of $g(y)=R^{\alpha}(x,x+y)$ vanish up to order $|\alpha|$ (cf. \cite[Corollary, p.~40]{Mey}),
Lemma \ref{LemmaIntegrationMRA} applies to yield the existence of kernel functions $R^{\alpha,\beta}\in C(\mathbb{R}^{d}\times \mathbb{R}^{d})$, $|\beta|=|\alpha|$, such that
\begin{equation} \label{Rprop3}
|R^{\alpha, \beta}(x, y)| \lesssim h_2^{|\alpha|} \,
\alpha!^{\rho_1+\rho_2-1}  e^{-c_2 |x - y|^{\frac{1}{\rho_2}}}, \qquad x,y\in\mathbb{R}^{d}, \ \alpha\in\mathbb{N}^{d},
\end{equation}
and

$$
\partial^{\alpha} \circ q_0 = \mathcal{C} \circ \partial^{\alpha} +\sum_{|\beta|=|\alpha|}\mathcal{R}^{\alpha,\beta} \circ \partial^{\alpha},
$$
where $\mathcal{C} $ denotes the operation of convolution by $\eta$ and $\mathcal{R}^{\alpha,\beta}$ is the integral operator with kernel $R^{\alpha,\beta}.$
We thus conclude that, for each $m\in\mathbb{N}$ and each $\varphi\in  \mathcal{S}^{s - \sigma}_{t} (\mathbb{R}^d)$,
\begin{align}
\label{MRA eq twisting}
\partial^{\alpha}_x (q_m \varphi) (x) =& 2^{m d} \int_{\mathbb{R}^d} \eta(2^m (x - y))\partial^{\alpha}_y \varphi(y) \, dy
\\
& \nonumber \qquad+2^{m d}
 \sum_{|\beta| = |\alpha|} \int_{\mathbb{R}^d}  R^{\alpha,
\beta} (2^m x, 2^m y) \partial^{\alpha}_y \varphi(y) \, dy.
\end{align}

We now estimate the second term of \eqref{MRA eq twisting}.
Using \eqref{Rprop3}, we have, for some $h,c>0$,
\begin{align*}
2^{md}\left|\int_{\mathbb{R}^d}  R^{\alpha,
\beta} (2^m x, 2^m y) \partial^{\alpha}_y \varphi(y) \, dy\right|
 &\lesssim (h h_2)^{|\alpha|} \alpha!^{s} 2^{md}\int_{\mathbb{R}^d}
e^{- c_2 |2^{m}(x - y)|^{\frac{1}{\rho_2}}} e^{-c |y|^{\frac{1}{t}}}
\, dy
\\
& \lesssim (h h_2)^{|\alpha|} \alpha!^{s}2^{md} e^{-c
|x|^{\frac{1}{t}}} \int_{\mathbb{R}^d} e^{- 2^{\frac{m}{\rho_2}} c_2
|u|^{\frac{1}{\rho_2}}} e^{c |u|^{\frac{1}{\rho_2}}} \, du
\\
&
\lesssim (h h_2)^{|\alpha|} \alpha!^{s} e^{-c |x|^{\frac{1}{t}}} \int_{\mathbb{R}^d} e^{-  c_2 |s|^{\frac{1}{\rho_2}}} e^{c2^{-\frac{m}{\rho_2}} |s|^{\frac{1}{\rho_2}}} \, ds
\\
& \lesssim (h h_2)^{|\alpha|}\alpha!^{s} e^{-c |x|^{\frac{1}{t}}}
\int_{\mathbb{R}^d} e^{- \frac{c_2}{2} |s|^{\frac{1}{\rho_2}}} \,
ds,
\end{align*}
when $ m $ is chosen such that $ 2^{m } \geq (2c/c_2)^{\rho_2}. $ Likewise, we verify that  $\{\eta_m\ast \varphi \}_{m\in\mathbb{N}}$ is a bounded sequence in $\mathcal{S}^{s}_{t}(\mathbb{R}^{d})$. For some $c_3>0$ (and the same $h,c$ as above),
\begin{align*}
2^{md}\left|\int_{\mathbb{R}^d}  \eta (2^m (x -y)) \partial^{\alpha}_y \varphi(y) \, dy\right|
 &\lesssim h^{|\alpha|} \alpha!^{s} 2^{md}\int_{\mathbb{R}^d}
e^{- c_3 |2^{m}(x - y)|^{\frac{1}{\rho_2}}} e^{-c |y|^{\frac{1}{t}}}
\, dy
\\
& \lesssim h^{|\alpha|}\alpha!^{s} e^{-c |x|^{\frac{1}{t}}}
\int_{\mathbb{R}^d} e^{- \frac{c_3}{2} |s|^{\frac{1}{\rho_2}}} \,
ds,
\end{align*}
whenever $2^{m } \geq (2c/c_3)^{\rho_2}$.
This shows the boundedness of $ \{ q_m \varphi \}_{m \in \mathbb{N}} $ in $\mathcal{S}^{s}_{t}(\mathbb{R}^{d})$ and we have therefore proved the first part of the theorem.

Let us now show part (ii). Let $\varphi\in \mathcal{S}^{s-\sigma}_{t} (\mathbb{R}^d)$.
We note that
$$
\langle q_m f, \varphi \rangle =  \langle  f, q_m \varphi \rangle,
$$
so that the result follows from part (i).
\end{proof}

\section{Wavelet expansions in Gelfand-Shilov spaces} \label{SecWaveletGS}

In this section we study wavelet expansions of Gelfand-Shilov functions and ultradistributions.  The automatic vanishing moment property of highly regular wavelets naturally leads to work with the spaces $(\mathcal{S}^{s}_{t})_0(\mathbb{R}^{d})$ if one intends to develop a wavelet expansion theory for their duals (cf. \cite{SanVin} for the tempered distribution counterpart).

We are interested in the multidimensional case. For the construction of wavelet bases of $L^{2}(\mathbb{R}^{d})$, we shall follow the tensor product approach based on a single one-dimensional MRA wavelet (see \cite[Chapter 3]{Mey}), which we now briefly recall.

Let $\psi \in (\mathcal{S}^{\rho_1}_{\rho_2})_{0}(\mathbb{R})$ be a $(\rho_1,\rho_2)$-regular orthonormal wavelet, with scaling function $\phi\in \mathcal{S}^{\rho_1}_{\rho_2}(\mathbb{R})$. Set $Q = \{ 0,1\}^d \setminus (0,\dots,0) $ and $ \Lambda =  Q \times \mathbb{Z} \times \mathbb{Z}^d$. We write $ \psi_0 (u) = \phi (u) $ and $ \psi_1 (u) = \psi (u), $ $ u
\in \mathbb{R}$. Let
$
\psi_\epsilon (x) = \psi_{\epsilon_1} (x_1) \psi_{\epsilon_2}
(x_2) \cdots \psi_{\epsilon_d} (x_d),$ $x= (x_1, x_2, \dots,
x_d ) \in \mathbb{R}^d,$
where $ \epsilon \in Q$, which
gives $ 2^d - 1 $ functions $\psi_{\epsilon} .$ Finally, for each $\lambda=(\epsilon,m,n)\in \Lambda$, we set
\begin{eqnarray*}
\psi_{\lambda}(x)=\psi_{\epsilon, m, n} (x) & =  &2^{md/2} \psi_{\epsilon} (2^m x - n), \qquad x\in\mathbb{R}^{d}.
\end{eqnarray*}
Note that each $\psi_{\lambda} \in  (\mathcal{S}^{\rho_1}_{\rho_2})_{0}(\mathbb{R}^d)$ by the construction.
Then,  it follows \cite{Mey} that the collection of functions
\begin{equation*}
\{ \psi_{\lambda} \; | \; \lambda \in \Lambda\} =
 \{ \psi_{\epsilon, m, n} \; | \; \epsilon \in Q, m \in \mathbb{Z},
n \in \mathbb{Z}^d \}
\end{equation*}
is an orthonormal basis of $ L^2 (\mathbb{R}^d). $ Every function $f\in L^{2}(\mathbb{R}^{d})$ can then be expanded as
\begin{equation}
\label{wexp eq 1}
f = \sum_{\lambda\in\Lambda} c^{\psi}_{\lambda}(f) \psi_{\lambda},
\end{equation}
where the wavelet coefficients of $f$ can be expressed in terms of the wavelet transform,
\begin{equation}
\label{coeffwavtrans}
c^{\psi}_{\lambda} (f)=c^{\psi}_{\epsilon, m, n} (f) = \langle f,\bar{\psi}_{\lambda}\rangle= 2^{-\frac{md}{2}} \, \mathcal{W}_{\psi} f (n
2^{-m}, 2^{-m}), \qquad \lambda=(\epsilon,m,n)\in\Lambda.
\end{equation}
These wavelet coefficients remain well-defined for any $f\in (\mathcal{S}^{\rho_1}_{\rho_2})'_{0}(\mathbb{R}^{d})$ via the above formulas.

We can now state our main result from this section.

\begin{theorem} \label{th convergence} Let $\psi\in (\mathcal{S}_{\rho_2}^{\rho_1})_{0}(\mathbb{R})$ be a $(\rho_1,\rho_2)$-regular orthonormal wavelet, where $ \rho_1 \geq 0$ and $\rho_2 > 1.$ Set
 $ \sigma = \rho_1 + \rho_2 - 1$ and consider parameters $ s > \sigma$ and $ t > \sigma + 1.$
\begin{itemize}
\item [(i)] If $ \varphi \in
(\mathcal{S}^{s - \sigma}_{t - \sigma})_0 (\mathbb{R}^d), $ then
$$
\varphi = \sum_{\lambda \in \Lambda}  c_{\lambda}^{\psi} (\varphi) \,
\psi_{\lambda} \qquad\mbox{converges in }(\mathcal{S}^s_t)_0 (\mathbb{R}^d).
$$

\item [(ii)] If $ f \in
((\mathcal{S}^s_t)_0 (\mathbb{R}^d))' $, then its wavelet series expansion \eqref{wexp eq 1}
converges in (the strong dual topology of) $ ((\mathcal{S}^{s-\sigma}_{t-\sigma})_0
(\mathbb{R}^d))'. $
\item [(iii)] We have the Parseval identity
$$\langle f, \varphi \rangle = \sum_{\lambda \in \Lambda} c^{\psi}_{\lambda} (f)
\, c^{\bar{\psi}}_{\lambda} (\varphi),$$
for any $ f \in
((\mathcal{S}^s_t)_0 (\mathbb{R}^d))' $ and $ \varphi \in
(\mathcal{S}^{s - \sigma}_{t - \sigma})_0 (\mathbb{R}^d).
$
\end{itemize}
\end{theorem}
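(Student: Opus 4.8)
The plan is to prove the three statements in the order (i), (iii), (ii): I would deduce the Parseval identity from (i), and then (ii) from (iii). Two ingredients have to be set up first. The first is the identification of the coefficients with point samples of a wavelet transform, $c^{\psi}_{\epsilon,m,n}(g)=2^{-md/2}\mathcal{W}_{\psi_\epsilon}g(n2^{-m},2^{-m})$ (each tensor wavelet $\psi_\epsilon$ lies in $(\mathcal{S}^{\rho_1}_{\rho_2})_0(\mathbb{R}^d)$ because it has at least one factor $\psi$, and $\langle\psi_{\epsilon,m,n},\varphi\rangle=c^{\bar\psi}_{\epsilon,m,n}(\varphi)$). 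The second is a calibration of Theorem~\ref{th3}: I would take there the parameters $\tau_1=t-\rho_2$, $\tau_2=s-\rho_1$ and the two remaining indices equal to $s-\sigma$ and $t$; then the restrictions $\tau_1>\rho_1$ and $\tau_2>\rho_2-1$ become exactly $t>\sigma+1$ and $s>\sigma$, and the source space of the wavelet mapping collapses to precisely $(\mathcal{S}^{s-\sigma}_{t-\sigma})_0(\mathbb{R}^d)$. Hence, for $\varphi$ in that space, $\mathcal{W}_{\psi_\epsilon}\varphi\in\mathcal{S}^{s-\sigma}_{t,\,t-\rho_2,\,s-\rho_1}(\mathbb{H}^{d+1})$, so there are $C,h_0>0$ (uniform on bounded subsets) with
\[
|c^{\psi}_{\epsilon,m,n}(\varphi)|\lesssim 2^{-md/2}\exp\!\big(-h_0(2^{-m/(t-\rho_2)}+2^{m/(s-\rho_1)}+(|n|2^{-m})^{1/t})\big),
\]
and the same for $c^{\bar\psi}_{\epsilon,m,n}(\varphi)$. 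Dually, since $s>\sigma\geq\rho_1$ and $t>\sigma+1>\rho_2$, I would apply Proposition~\ref{Calderon 2} to a Hahn--Banach extension of $f$ (legitimate because the samples $a^{-d}\bar\psi_\epsilon((\cdot-b)/a)$ already belong to $(\mathcal{S}^s_t)_0(\mathbb{R}^d)$, vanishing moments being preserved by translations and dilations) to get, for every $k>0$, $|c^{\psi}_{\epsilon,m,n}(f)|\lesssim 2^{-md/2}\exp(k(2^{-m/(t-\rho_2)}+2^{m/(s-\rho_1)}+(|n|2^{-m})^{1/t}))$ when $f\in((\mathcal{S}^s_t)_0(\mathbb{R}^d))'$.

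For part (i), writing $S_F=\sum_{\lambda\in F}c^{\psi}_\lambda(\varphi)\psi_\lambda$ for finite $F\subset\Lambda$, I would estimate the seminorm $p^{s,t}_{h,c}(S_F)$ scale by scale. Using $\partial^\gamma\psi_{\epsilon,m,n}(x)=2^{md/2+m|\gamma|}(\partial^\gamma\psi_\epsilon)(2^mx-n)$ and $|\partial^\gamma\psi_\epsilon(u)|\lesssim A^{|\gamma|}\gamma!^{\rho_1}e^{-\delta|u|^{1/\rho_2}}$, the $m$-th scale contributes, after the sum over the translates $n$ (carried out as in Meyer's estimates \cite[pp.~39--41]{Mey}, producing the weight $e^{-c|x|^{1/t}}$), a quantity controlled by $\big(\sup_\gamma(hA2^m)^{|\gamma|}/\gamma!^{\,s-\rho_1}\big)e^{-h_0 2^{m/(s-\rho_1)}}$ on the fine scales $m\geq 0$, and by an analogous quantity involving $e^{-h_0 2^{-m/(t-\rho_2)}}$ and $\sup_x e^{c|x|^{1/t}-\delta 2^{m/\rho_2}|x|^{1/\rho_2}}$ on the coarse scales $m<0$. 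Since $\sup_\gamma(hA2^m)^{|\gamma|}/\gamma!^{\,s-\rho_1}\lesssim\exp\!\big((s-\rho_1)(hA)^{1/(s-\rho_1)}2^{m/(s-\rho_1)}\big)$, the fine-scale series is summable in $m$ once $h$ is chosen small enough that $(s-\rho_1)(hA)^{1/(s-\rho_1)}<h_0$, and the coarse-scale series is summable once $c$ is chosen small enough, using $1/t<1/\rho_2$. This gives $\sup_F p^{s,t}_{h,c}(S_F)<\infty$. Since also $S_F\to\varphi$ unconditionally in $L^2(\mathbb{R}^d)$, the Heine--Borel property of the Montel space $\mathcal{S}^s_t(\mathbb{R}^d)$ together with the uniqueness-of-accumulation-point argument in the proof of Lemma~\ref{conv1} then force $S_F\to\varphi$ in $\mathcal{S}^s_t(\mathbb{R}^d)$, hence in the closed subspace $(\mathcal{S}^s_t)_0(\mathbb{R}^d)$.

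Parts (iii) and (ii) would follow quickly. Conjugation preserves all the relevant structure, so $\bar\psi$ is again a $(\rho_1,\rho_2)$-regular orthonormal wavelet, and (i) applied to $\bar\psi$ gives $\varphi=\sum_\lambda c^{\bar\psi}_\lambda(\varphi)\bar\psi_\lambda$ in $(\mathcal{S}^s_t)_0(\mathbb{R}^d)$; applying $f$ and using its continuity, $\langle f,\varphi\rangle=\sum_\lambda c^{\bar\psi}_\lambda(\varphi)c^{\psi}_\lambda(f)$, and absolute convergence follows from the estimates above, since $\sum_{\epsilon,m,n}2^{-md}\exp(-(h_0-k)(2^{-m/(t-\rho_2)}+2^{m/(s-\rho_1)}+(|n|2^{-m})^{1/t}))<\infty$ for any $0<k<h_0$ (for fixed $m$ the $n$-sum is $\lesssim 2^{md}$ on the fine scales and $\lesssim 1$ on the coarse scales, and is absorbed by the double-exponential factors). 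This is (iii). For (ii), given $f$ and a bounded set $B\subset(\mathcal{S}^{s-\sigma}_{t-\sigma})_0(\mathbb{R}^d)$, one has $\langle S_F,\varphi\rangle=\sum_{\lambda\in F}c^{\psi}_\lambda(f)c^{\bar\psi}_\lambda(\varphi)$, so by (iii)
\[
\sup_{\varphi\in B}|\langle f-S_F,\varphi\rangle|\leq\sum_{\lambda\notin F}|c^{\psi}_\lambda(f)|\sup_{\varphi\in B}|c^{\bar\psi}_\lambda(\varphi)|\longrightarrow 0\qquad(F\uparrow\Lambda),
\]
because the decay bound for $c^{\bar\psi}_\lambda(\varphi)$ is uniform over $B$ and the resulting series is summable as above; this is exactly convergence in the strong dual of $(\mathcal{S}^{s-\sigma}_{t-\sigma})_0(\mathbb{R}^d)$.

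The hard part is the scale-by-scale estimate in part (i). Differentiation produces, on the fine scales, a growth of order $\exp(\Theta(2^{m/(s-\rho_1)}))$ that is matched \emph{exactly in order} by the decay $\exp(-\Theta(2^{m/(s-\rho_1)}))$ of the coefficients coming from Theorem~\ref{th3}, so that convergence survives only by exploiting that the Gelfand--Shilov topology is an inductive limit, which permits choosing the seminorm parameters $h,c$ as small as needed; a completely analogous matched balance at rate $2^{1/(t-\rho_2)}$ governs the coarse scales. This is precisely the mechanism by which the loss of regularity $\sigma=\rho_1+\rho_2-1$ enters, and it makes the thresholds $s>\sigma$ and $t>\sigma+1$ sharp, being exactly what allows the calibration $\tau_2=s-\rho_1>\rho_2-1$, $\tau_1=t-\rho_2>\rho_1$ in Theorem~\ref{th3}.
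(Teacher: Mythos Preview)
Your proposal is correct and follows essentially the same approach as the paper: the same calibration $\tau_1=t-\rho_2$, $\tau_2=s-\rho_1$ of Theorem~\ref{th3} (this is the paper's Lemma~\ref{coef}), the same scale-by-scale balance of $2^{m|\gamma|}/\gamma!^{\,s-\rho_1}$ against $e^{-h_0 2^{m/(s-\rho_1)}}$ on fine scales (and the analogous coarse-scale balance), and the same deduction of (ii)--(iii) from (i) applied to $\bar\psi$. The only cosmetic differences are that the paper shows directly that the tail series tends to $0$ in the seminorm $p^{s,t}_{h,c}$ (rather than arguing via boundedness of partial sums and the Montel property), and for (ii) it invokes barreledness to reduce strong to weak convergence rather than estimating $\sup_{\varphi\in B}|\langle f-S_F,\varphi\rangle|$ directly; both variants are equivalent here.
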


We set the ground for the proof of Theorem \ref{th convergence} by giving some growth estimates for wavelet coefficients of Gelfand-Shilov functions and ultradistributions.
To quantify the decay of wavelet coefficients it is convenient to introduce spaces of rapidly decreasing multi-sequences
$ \mathcal{W}^{s}_{t,\rho_1, \rho_2} (\Lambda) $ as follows:
$ \{c_{\epsilon, m, n}\}_{(\epsilon, m, n) \in \Lambda}  = \{c_{\lambda}\}_{\lambda \in \Lambda} \in \mathcal{W}^{s}_{t,\rho_1, \rho_2} (\Lambda) $
if and only if there exists $ k \in \mathbb{N} $ such that the norm
\begin{equation} \label{sW}
\| \{c_{\lambda}\} \| ^{\mathcal{W}^{s}_{t,\rho_1, \rho_2}}_k :=
\sup_{\lambda \in \Lambda} |c_{\lambda}|
 e^{k \big( (\frac{1}{2^m})^{\frac{1}{t -
{\rho_2}}} + (2^m)^{\frac{1}{s - {\rho_1}}} +
|\frac{n}{2^m}|^{\frac{1}{t}} \big)}
\end{equation}
is finite. Then, $ \mathcal{W}^{s}_{t,\rho_1, \rho_2} (\Lambda) $ becomes a
(DFS)-space with the inductive limit topology induced by the norms (\ref{sW}). Its dual space $ (\mathcal{W}^{s}_{t,\rho_1, \rho_2} (\Lambda))' $ is the (FS)-space consisting of multisequences $ \{c_{\lambda }\} $ such that
$$
\| \{c _{\lambda}\} \|^{(\mathcal{W}^{s}_{t,\rho_1, \rho_2})'}_{-k} :=
\sup_{\lambda \in \Lambda} |c_{\lambda}|
 e^{-k \big( (\frac{1}{2^m})^{\frac{1}{t -
{\rho_2}}} + (2^m)^{\frac{1}{s - {\rho_1}}} +
|\frac{n}{2^m}|^{\frac{1}{t}} \big)}
$$
is finite for every $ k \in \mathbb{N}. $

\begin{lemma} \label{coef} Let $\psi\in (\mathcal{S}_{\rho_2}^{\rho_1})_{0}(\mathbb{R})$ be a $(\rho_1,\rho_2)$-regular orthonormal wavelet, where $ \rho_1 \geq 0$ and $\rho_2 > 1.$ Set
$ \sigma = \rho_1 + \rho_2 - 1$ and consider parameters $ s > \sigma$ and $ t > \sigma + 1.$ Then, for every $l>0$ there is $k>0$ such that, for every $\varphi \in (\mathcal{S}^{s - \sigma}_{t - \sigma})_0
(\mathbb{R}^d) $ with $p_l^{s - \sigma, t - \sigma} (\varphi)<\infty$,

\begin{equation*}
\|\{ c_{\lambda} ^{\psi} (\varphi)\}\| ^{\mathcal{W}^{s}_{t,\rho_1, \rho_2}}_k
\lesssim  p_l^{s - \sigma, t - \sigma} (\varphi).
\end{equation*}
\end{lemma}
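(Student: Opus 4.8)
The strategy is to deduce the bound from the continuity of the wavelet mapping in Theorem \ref{th3}, via Grothendieck's factorization theorem, and then to absorb the dyadic normalization factors by hand. First I would re-express the coefficients as values of wavelet transforms: for $\lambda=(\epsilon,m,n)\in\Lambda$ the tensor factor $\psi_{\epsilon}$ contains at least one copy of $\psi$, hence $\psi_{\epsilon}\in(\mathcal{S}^{\rho_1}_{\rho_2})_0(\mathbb{R}^{d})$, and unwinding \eqref{Wavelet} in \eqref{coeffwavtrans} gives $c^{\psi}_{\lambda}(\varphi)=2^{-md/2}\,\mathcal{W}_{\psi_{\epsilon}}\varphi\bigl(n2^{-m},2^{-m}\bigr)$.

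Next comes the parameter bookkeeping needed to invoke Theorem \ref{th3}. I would apply it with target indices $s$, $t$, $\tau_{1}=t-\rho_{2}$, $\tau_{2}=s-\rho_{1}$; the hypotheses $s>0$, $t>\rho_{1}+\rho_{2}$, $\tau_{1}>\rho_{1}$, $\tau_{2}>\rho_{2}-1$ all follow from $s>\sigma$ and $t>\sigma+1$, and one checks on the nose that the resulting domain $\bigl(\mathcal{S}^{\min\{s,\tau_{2}-\rho_{2}+1\}}_{1-\rho_{1}+\min\{t-\rho_{2},\tau_{1}\}}\bigr)_0(\mathbb{R}^{d})$ is exactly $(\mathcal{S}^{s-\sigma}_{t-\sigma})_0(\mathbb{R}^{d})$. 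Thus, for each $\epsilon\in Q$, the linear map $\varphi\mapsto\mathcal{W}_{\psi_{\epsilon}}\varphi$ is continuous from $(\mathcal{S}^{s-\sigma}_{t-\sigma})_0(\mathbb{R}^{d})$ into $\mathcal{S}^{s}_{t,\,t-\rho_{2},\,s-\rho_{1}}(\mathbb{H}^{d+1})$.

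The crucial step is to convert this continuity into an estimate with the right order of quantifiers. Fix $l>0$ and let $B_{l}^{0}$ be the Banach space of all $\varphi\in(\mathcal{S}^{s-\sigma}_{t-\sigma})_0(\mathbb{R}^{d})$ with $p^{s-\sigma,t-\sigma}_{l}(\varphi)<\infty$; it embeds continuously into $(\mathcal{S}^{s-\sigma}_{t-\sigma})_0(\mathbb{R}^{d})$. Composing $\mathcal{W}_{\psi_{\epsilon}}$ with the canonical continuous map of $\mathcal{S}^{s}_{t,t-\rho_{2},s-\rho_{1}}(\mathbb{H}^{d+1})$ into its $\alpha=0$ component $Y_{0}$ — an $(LB)$-space with steps normed by $p^{s,t,t-\rho_{2},s-\rho_{1}}_{0,h}$, $h>0$ — and invoking Grothendieck's factorization theorem (a continuous linear map from a Banach space into an $(LB)$-space factors through one step), I obtain $h_{\epsilon}>0$ with $p^{s,t,t-\rho_{2},s-\rho_{1}}_{0,h_{\epsilon}}(\mathcal{W}_{\psi_{\epsilon}}\varphi)\lesssim p^{s-\sigma,t-\sigma}_{l}(\varphi)$ on $B_{l}^{0}$. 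Taking $h=\min_{\epsilon\in Q}h_{\epsilon}>0$ and $\beta=0$ in \eqref{norma-GSupper} yields, uniformly in $\epsilon$ and $(b,a)\in\mathbb{H}^{d+1}$,
\[
e^{h\left(a^{1/(t-\rho_{2})}+a^{-1/(s-\rho_{1})}+|b|^{1/t}\right)}\bigl|\mathcal{W}_{\psi_{\epsilon}}\varphi(b,a)\bigr|\lesssim p^{s-\sigma,t-\sigma}_{l}(\varphi).
\]

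Finally I would pick any $k$ with $0<k<h$ and specialize $a=2^{-m}$, $b=n2^{-m}$, for which the weight in \eqref{sW} is precisely $e^{k(a^{1/(t-\rho_{2})}+a^{-1/(s-\rho_{1})}+|b|^{1/t})}$. Then
\[
|c^{\psi}_{\lambda}(\varphi)|\,e^{k(\,\cdots\,)}=2^{-md/2}\,|\mathcal{W}_{\psi_{\epsilon}}\varphi(b,a)|\,e^{k(\,\cdots\,)}\lesssim p^{s-\sigma,t-\sigma}_{l}(\varphi)\,\cdot\,2^{-md/2}e^{-(h-k)(2^{-m})^{1/(t-\rho_{2})}},
\]
where I bounded the exponent below by its first term. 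The map $m\mapsto 2^{-md/2}e^{-(h-k)(2^{-m})^{1/(t-\rho_{2})}}$ is bounded on $\mathbb{Z}$ — it is $\le 1$ for $m\ge0$, while as $m\to-\infty$ the exponentially large term $(2^{-m})^{1/(t-\rho_{2})}$ in the exponent overwhelms $2^{-md/2}$ — so taking the supremum over $\lambda\in\Lambda$ gives $\|\{c^{\psi}_{\lambda}(\varphi)\}\|^{\mathcal{W}^{s}_{t,\rho_{1},\rho_{2}}}_{k}\lesssim p^{s-\sigma,t-\sigma}_{l}(\varphi)$, as desired. I expect the main obstacle to be the functional-analytic passage from joint continuity of $\mathcal{W}$ to the step-wise estimate ``$\forall l\ \exists k$'' — where one genuinely needs the (DFS)/$(LB)$ structure and factorization rather than a bare seminorm chase — together with the delicate but routine verification that the parameters of Theorem \ref{th3} can be tuned so its domain coincides exactly with $(\mathcal{S}^{s-\sigma}_{t-\sigma})_0(\mathbb{R}^{d})$.
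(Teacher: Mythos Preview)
Your proof is correct and follows essentially the same route as the paper: invoke Theorem~\ref{th3} with $\tau_1=t-\rho_2$, $\tau_2=s-\rho_1$ so that the domain becomes exactly $(\mathcal{S}^{s-\sigma}_{t-\sigma})_0(\mathbb{R}^d)$, and then read off the coefficient bounds via \eqref{coeffwavtrans}. The paper compresses the passage from continuity to the step-wise estimate and the absorption of the $2^{-md/2}$ factor into the word ``obviously''; your use of Grothendieck's factorization theorem and the explicit handling of the dyadic factor simply unpack that word, but add nothing conceptually different.
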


\begin{proof}
By Theorem \ref{th3} with $ \tau_1 = t- \rho_2 $ and $ \tau_2 = s
- \rho_1, $ we have that
$$
{\mathcal W}_{\psi_{\epsilon}} : ({\mathcal S}^{s - \sigma}_{t -
\sigma})_0 (\mathbb{R}^d) \to {\mathcal S}^{s}_{t, t- \rho_2, s -
\rho_1} (\mathbb{H}^{d+1}), \qquad \epsilon \in Q,
$$
is a continuous mapping, which combined with the formula \eqref{coeffwavtrans} obviously yields the assertion.
\end{proof}

We now present the proof of Theorem \ref{th convergence}.

\begin{proof}[Proof of Theorem \ref{th convergence}] Let us prove part (i). Set $ \Lambda_{M,N} = \{ (\epsilon, m, n) \in \Lambda \; : \; |m|  \leq M, \; |n| \leq N \}$. We already know that the wavelet series converges pointwise to  $\varphi$ (actually, the convergence even holds in the Schwartz space, cf. e.g. \cite{SanVin}), so that we may write the difference between $\varphi$ and a partial sum of the wavelet expansion as a pointwise convergent tail series. Our task therefore consists in showing (cf. \eqref{norma-GS2}) that there
exist $ h, c > 0 $ such that
\[
 \lim_{M, N \to \infty} p^{s, t}_{h, c} \Big(\sum_{\lambda \in \Lambda \setminus \Lambda_{M,N}}
c_{\lambda}^{\psi} (\varphi) \, \psi_{\lambda} \Big)
 \leq \lim_{M, N \to \infty} \sum_{\lambda \in \Lambda \setminus  \Lambda_{M,N}} |c^{\psi}_{\lambda}(\varphi)| \sup_{x\in\mathbb{R}^{d},\: \beta\in\mathbb{N}^{d}} S^{\beta}_{\lambda}(x)= 0,
\]
where
$$
S_{\lambda}^{\beta} (x) = S_{\epsilon, m, n}^{\beta} (x)
 =\frac{h^{|\beta|}}{\beta!^s} e^{c
|x|^{\frac{1}{t}}}|\partial^\beta \psi_{\lambda} ( x)|
= 2^{m(\frac{ d}{2}+|\beta|)} \frac{h^{|\beta|}}{\beta!^s} e^{c
|x|^{\frac{1}{t}}}|\partial^\beta_x \psi_{\epsilon} (2^m x - n)|.
$$
Lemma \ref{coef} implies that there is $ k > 0
$ such that
$$
|c^{\psi}_{(\epsilon,m,n)} (\varphi)| \lesssim e^{-k \big(
(\frac{1}{2^m})^{\frac{1}{t - {\rho_2}}} + (2^m)^{\frac{1}{s -
\rho_1}} + |\frac{n}{2^m}|^{\frac{1}{t}} \big)}, \qquad  (\epsilon,m,n) \in \Lambda.
$$
Since $ \psi_{\lambda} \in
(\mathcal{S}^{\rho_1}_{\rho_2})_0 (\mathbb{R}^d) $, there are $ h_1, c_1 > 0 $ such that
\begin{align*}
S_{\epsilon,m,n}^{\beta} (x)
& \lesssim \frac{h^{|\beta|}}{\beta!^s} 2^{m (\frac{ d}{2}+|\beta|)} \, e^{c |x|^{\frac{1}{t}}} \,
\frac{\beta!^{\rho_1}}{h_1^{|\beta|}} \, e^{-c_1
|2^{m}x-n|^{\frac{1}{\rho_2}}}
\\
& \lesssim \left(\frac{h}{h_1}\right)^{|\beta|}
\frac{2^{m (\frac{ d}{2}+|\beta|)}}{\beta!^{s - \rho_1}} \, e^{c
|\frac{n}{2^m}|^{\frac{1}{t}}} \, e^{c
|x-\frac{n}{2^m}|^{\frac{1}{t}} - c_1 |2^{m}x-n|^{\frac{1}{\rho_2}}}.
\end{align*}

We choose $c=\min\{c_1,k/2\}$ and make use of $\rho_2<t$.  The function $ g(r):= c \, \big(2^{-m}r\big)^{\frac{1}{t}} - c_1 \, r^{\frac{1}{\rho_2}}$ attains its maximum value on $[0,\infty)$ at $ r_0 = \big(\frac{c \: \rho_2}{c_1 \: t \: 2^{\frac{m}{t}}}\big)^{\frac{\rho_2 \: t}{t - \rho_2}} $ and
$$
g(r_0)< c 2^{-\frac{m}{t}}\left(\frac{c \rho_2}{ c_1  t 2^{\frac{m}{t}}}\right)^{\frac{\rho_2}{t - \rho_2}}< c2^{-\frac{m}{t-\rho_2}} .
$$
Hence,
\[
 \sup_{x\in\mathbb{R}^{d}}S_{\epsilon,m,n}^{\beta} (x) \lesssim  \left(\frac{h}{h_1}\right)^{|\beta|} \frac{2^{m (\frac{ d}{2}+|\beta|)}}{\beta!^{s - \rho_1}}  \, e^{\frac{k}{2} |\frac{n}{2^m}|^{\frac{1}{t}}}
 e^{\frac{k}{2} (\frac{1}{2^m})^{\frac{1}{t - \rho_2}}}.
\]
From $ r^{\nu n} \leq n!^{\nu} e^{\nu r}$, $(n + m)! \leq 2^{n + m} n! m!$, and $|\beta|! \leq d^{|\beta|} \beta! $,
$\nu,r > 0,$ $n,m \in \mathbb{N}, $ $\beta \in \mathbb{N}^d $ (see e.g. \cite[pp.~13--14]{NR}),
it follows that
\begin{align*}
2^{m (\frac{ d}{2}+|\beta|)} & = 2^{\frac{ md}{2}}\left(\frac{2(s - \rho_1)}{c}\right)^{|\beta| (s -
\rho_1)} \left(\frac{c \: 2^{\frac{m}{s - \rho_1}}}{2(s - \rho_1)}
\right)^{|\beta| (s - \rho_1)} \\
& \leq
2^{\frac{ md}{2}}\left(\frac{2(s - \rho_1)}{c}\right)^{|\beta| (s -
\rho_1)}
|\beta|! ^{s - \rho_1} e^{\frac{c}{2} \: (2^m)^\frac{1}{s - \rho_1}}\\
&
\leq
\left(\frac{2d(s - \rho_1)}{c}\right)^{|\beta| (s -
\rho_1)}
\beta! ^{s - \rho_1} 2^{\frac{ md}{2}} e^{\frac{c}{2} \: (2^m)^\frac{1}{s - \rho_1}}\\
&
\lesssim
\left(\frac{2d(s - \rho_1)}{c}\right)^{|\beta| (s -
\rho_1)}
\, \beta!^{s - \rho_1} \, e^{c \: (2^m)^\frac{1}{s
- \rho_1}}.
\end{align*}
 Choosing $h=h_{1}(2d(s - \rho_1)/c)^{-|\beta| (s -
\rho_1)}$, we finally obtain
$$
|c^{\psi}_{(\epsilon,m,n)} (\varphi)| \sup_{x\in  \mathbb{R}^{d},\: \beta \in
\mathbb{N}^d}  S_{(\epsilon,m,n)}^{\beta} (x) \lesssim  e^{-\frac{k}{2} (2^m)^{\frac{1}{s - \rho_1}}}
e^{-\frac{k}{2} |\frac{n}{2^m}|^{\frac{1}{t}}} e^{-\frac{k}{2}
(\frac{1}{2^m})^{\frac{1}{t - \rho_2}}},
$$
which establishes part (i) of the theorem.

We now show parts (ii) and (iii) simultaneously. Since the Gelfand-Shilov spaces are barreled, it suffices to show the weak convergence of \eqref{wexp eq 1}.
Let $ \varphi \in (\mathcal{S}^{s-\sigma}_{t-\sigma})_0 (\mathbb{R}^d). $
Since $\bar{\psi}$ is also a $(\rho_1,\rho_2)$-regular orthonormal wavelet, part (i) yields
$$
\varphi = \sum_{\lambda \in \Lambda} c_{\lambda}^{\bar{\psi}} (\varphi) \,
\bar{\psi}_{\lambda},
$$
with convergence in $ (\mathcal{S}^s_t)_0 (\mathbb{R}^d). $ By assumption  $ f \in
((\mathcal{S}^s_t)_0 (\mathbb{R}^d))'\subset ((\mathcal{S}^{s-\sigma}_{t-\sigma})_0
(\mathbb{R}^d))', $ so that we may exchange summation with dual pairing in

\begin{align*}
\langle f, \varphi \rangle & = \Big\langle f, \sum_{\lambda \in \Lambda}
 c_{\lambda}^{\bar{\psi}} (\varphi) \bar{\psi}_{\lambda} \Big\rangle =
\sum_{\lambda \in \Lambda} c_{\lambda}^{\bar{\psi}} (\varphi) \langle f,
\bar{\psi}_{\lambda} \rangle  \qquad \Big(= \sum_{\lambda \in \Lambda} c^{\psi}_{\lambda} (f)
\, c^{\bar{\psi}}_{\lambda} (\varphi)\Big)
\\
& = \sum_{\lambda \in \Lambda} c_{\lambda}^{\psi} (f) \langle \psi_{\lambda}, \varphi  \rangle  = \Big\langle \sum_{\lambda \in \Lambda} c_{\lambda}^{\psi} (f)
\psi_{\lambda}, \varphi \Big\rangle.
\end{align*}
and the theorem has been proved.
\end{proof}

We end this section with a corollary. For functions and ultradistributions, we consider the wavelet coefficient mapping $c^{\psi}:f \mapsto \{c^{\psi}_{\lambda}(f)\}_{\lambda\in\Lambda}$.

\begin{corollary} \label{iso} Let $\psi\in (\mathcal{S}_{\rho_2}^{\rho_1})_{0}(\mathbb{R})$ be a $(\rho_1,\rho_2)$-regular orthonormal wavelet, where $ \rho_1 \geq 0$ and $\rho_2 > 1.$ Set
 $ \sigma = \rho_1 + \rho_2 - 1$ and consider $ s > \sigma$ and $ t > \sigma + 1.$ Then, the following mappings are continuous and injective:
\begin{equation}
\label{wcoefeq1}
 c^{\psi} : (\mathcal{S}^{s - \sigma}_{t -
\sigma})_0 (\mathbb{R}^d) \to \mathcal{W}^{s}_{t,\rho_1,
\rho_2} (\Lambda)
\end{equation}
and
\begin{equation}
\label{wcoefeq2}
c^{\psi} : ((\mathcal{S}^s_t)_0 (\mathbb{R}^d))'
\to (\mathcal{W}^{s}_{t,\rho_1, \rho_2} (\Lambda))' .
\end{equation}
\end{corollary}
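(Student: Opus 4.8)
The plan is to derive Corollary \ref{iso} as an essentially formal consequence of the results already established. For the continuity of the mapping \eqref{wcoefeq1}, the key point is that Lemma \ref{coef} furnishes precisely the required estimate: for every $l>0$ there is $k>0$ with $\|\{c^{\psi}_{\lambda}(\varphi)\}\|^{\mathcal{W}^{s}_{t,\rho_1,\rho_2}}_{k}\lesssim p^{s-\sigma,t-\sigma}_{l}(\varphi)$. Since the topology of $(\mathcal{S}^{s-\sigma}_{t-\sigma})_{0}(\mathbb{R}^{d})$ is the inductive-limit topology generated by the norms $p^{s-\sigma,t-\sigma}_{l}$ and the topology of $\mathcal{W}^{s}_{t,\rho_1,\rho_2}(\Lambda)$ is the inductive limit of the normed spaces attached to the norms $\|\cdot\|_{k}$, this bound says exactly that $c^{\psi}$ maps the step space indexed by $l$ continuously into the step space indexed by $k$, hence $c^{\psi}$ is continuous on the inductive limit. (One should note that $(\mathcal{S}^{s-\sigma}_{t-\sigma})_{0}$ is a closed subspace of $\mathcal{S}^{s-\sigma}_{t-\sigma}$, so it inherits the DFS structure and the same description of its topology via the restricted norms.) Injectivity of \eqref{wcoefeq1} is immediate from Theorem \ref{th convergence}(i): if $c^{\psi}_{\lambda}(\varphi)=0$ for all $\lambda$, then the (convergent) wavelet series representing $\varphi$ is the zero series, so $\varphi=0$.

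For the dual mapping \eqref{wcoefeq2}, the natural approach is duality/transposition. The wavelet coefficient map on ultradistributions is, by \eqref{coeffwavtrans}, given by $c^{\psi}_{\lambda}(f)=\langle f,\bar{\psi}_{\lambda}\rangle$; I would identify it as the transpose of the reconstruction (synthesis) map $\{d_{\lambda}\}\mapsto \sum_{\lambda}d_{\lambda}\bar{\psi}_{\lambda}$. Concretely: since $\bar{\psi}$ is again a $(\rho_1,\rho_2)$-regular orthonormal wavelet, the already-proven continuity of $c^{\bar{\psi}}:(\mathcal{S}^{s}_{t})_{0}(\mathbb{R}^{d})\to\mathcal{W}^{s\,?}_{\dots}$ — here one applies the same Lemma \ref{coef} but now reading $s,t$ in place of $s-\sigma,t-\sigma$, i.e. with target $\mathcal{W}^{s+\sigma}_{t+\sigma,\rho_1,\rho_2}$, which is \emph{not} quite what is needed — suggests instead that I should directly verify that for $f\in((\mathcal{S}^{s}_{t})_{0}(\mathbb{R}^{d}))'$ the multisequence $\{c^{\psi}_{\lambda}(f)\}$ lies in $(\mathcal{W}^{s}_{t,\rho_1,\rho_2}(\Lambda))'$ and that $f\mapsto\{c^{\psi}_{\lambda}(f)\}$ is continuous for the strong topologies. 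The cleanest route: let $B\subset \mathcal{W}^{s}_{t,\rho_1,\rho_2}(\Lambda)$ be bounded; the pairing $\langle f,\varphi\rangle=\sum_{\lambda}c^{\psi}_{\lambda}(f)c^{\bar\psi}_{\lambda}(\varphi)$ from Theorem \ref{th convergence}(iii), together with continuity of \eqref{wcoefeq1} applied to $\bar\psi$, shows that $c^{\psi}_{\lambda}(f)=\langle f,\bar\psi_\lambda\rangle$ with $\{\bar\psi_\lambda\}$ forming a bounded set in $(\mathcal{S}^{s-\sigma}_{t-\sigma})_{0}(\mathbb{R}^{d})$ when weighted appropriately; boundedness of $f$ on that bounded set yields exactly the finiteness of $\|\{c^{\psi}_{\lambda}(f)\}\|^{(\mathcal{W}^{s}_{t,\rho_1,\rho_2})'}_{-k}$ for every $k$, and continuity follows since bounded sets of $((\mathcal{S}^{s}_{t})_{0})'$ map to bounded sets of $(\mathcal{W}^{s}_{t,\rho_1,\rho_2}(\Lambda))'$.

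I would phrase this more efficiently using transposition: the synthesis operator $\Sigma:\mathcal{W}^{s}_{t,\rho_1,\rho_2}(\Lambda)\to (\mathcal{S}^{s}_{t})_{0}(\mathbb{R}^{d})$, $\Sigma(\{d_\lambda\})=\sum_\lambda d_\lambda\psi_\lambda$, is continuous (this is essentially the content of the computation in the proof of Theorem \ref{th convergence}(i), which bounds $\sum_\lambda|c_\lambda|\,p^{s,t}_{h,c}(\psi_\lambda)$ in terms of a $\mathcal{W}^{s}_{t,\rho_1,\rho_2}$-norm of $\{c_\lambda\}$). Then \eqref{wcoefeq2} is (up to the harmless conjugation $\psi\leftrightarrow\bar\psi$) the transpose ${}^{t}\Sigma$ of $\Sigma$ composed with the canonical inclusion, hence automatically continuous for the strong dual topologies; this is where the DFS/FS and reflexivity (Montel) properties recorded in Section \ref{Sec1} are invoked so that strong duals behave well and transposes of continuous maps are continuous. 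Injectivity of \eqref{wcoefeq2} follows from Theorem \ref{th convergence}(ii): if all $c^{\psi}_{\lambda}(f)=0$ then the wavelet expansion of $f$ is zero, and since it converges to $f$ in $((\mathcal{S}^{s-\sigma}_{t-\sigma})_{0})'$ we get $f=0$; equivalently, injectivity of ${}^{t}\Sigma$ is dual to the density of $\mathrm{span}\{\psi_\lambda\}$ in $(\mathcal{S}^{s}_{t})_{0}(\mathbb{R}^{d})$, which is Theorem \ref{th convergence}(i). The main obstacle is bookkeeping: making sure the indices on the sequence spaces match under the $\sigma$-shift (the domain uses $s-\sigma,t-\sigma$ while the target keeps $s,t$), and checking that the estimates in Lemma \ref{coef} and in the proof of Theorem \ref{th convergence} are genuinely the statements of continuity of $c^{\psi}$ and $\Sigma$ respectively between the correct inductive-limit step spaces — no new analysis is required beyond what those proofs already contain.
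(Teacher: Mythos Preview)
Your treatment of injectivity (via Theorem~\ref{th convergence}) and of the continuity of \eqref{wcoefeq1} (via Lemma~\ref{coef}) matches the paper exactly. The difference lies in how you handle the continuity of \eqref{wcoefeq2}. The paper does not transpose a synthesis operator; instead it appeals directly to Proposition~\ref{Calderon 2}: for $f$ in a bounded set $B\subset((\mathcal{S}^{s}_{t})_{0}(\mathbb{R}^{d}))'$ and each $k>0$, the growth bound $|\mathcal{W}_{\psi_{\epsilon}}f(b,a)|\lesssim e^{k(a^{1/(t-\rho_2)}+a^{-1/(s-\rho_1)}+|b|^{1/t})}$ holds uniformly in $f\in B$, and evaluating at $(b,a)=(n2^{-m},2^{-m})$ via \eqref{coeffwavtrans} yields precisely $\|\{c^{\psi}_{\lambda}(f)\}\|^{(\mathcal{W}^{s}_{t,\rho_1,\rho_2})'}_{-k}<\infty$ uniformly on $B$. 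Thus $c^{\psi}$ is bounded, and since both sides are (FS) and hence Fr\'echet, boundedness equals continuity.

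Your route is also correct: the estimates in the proof of Theorem~\ref{th convergence}(i) do show that $p^{s,t}_{h,c}(\psi_{\lambda})$ is controlled by $e^{k'((2^{m})^{1/(s-\rho_1)}+(2^{-m})^{1/(t-\rho_2)}+|n2^{-m}|^{1/t})}$ for suitable $h,c,k'$, which is exactly what is needed for the synthesis operator $\Sigma:\mathcal{W}^{s}_{t,\rho_1,\rho_2}(\Lambda)\to(\mathcal{S}^{s}_{t})_{0}(\mathbb{R}^{d})$ to be continuous, after which transposition (using that the duals are Fr\'echet, hence bornological) gives the claim. The trade-off: the paper's argument is a one-line invocation of an already-packaged estimate (Proposition~\ref{Calderon 2}) and avoids revisiting the proof of Theorem~\ref{th convergence}; your argument stays entirely within the Section~\ref{SecWaveletGS} machinery and makes the duality between analysis and synthesis explicit, at the cost of extracting the $\psi_{\lambda}$-norm bound from inside that proof rather than citing a standalone result.
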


\begin{proof}  Theorem \ref{th convergence} yields the injectivity of both mappings. Furthermore, the continuity of \eqref{wcoefeq1} is a direct consequence of Lemma \ref{coef}.
By Proposition \ref{Calderon 2} and (\ref{coeffwavtrans}), the mapping \eqref{wcoefeq2} is bounded, which is equivalent to continuity for Fr\'{e}chet spaces.
\end{proof}


\begin{thebibliography}{99}









\bibitem{Dau} I.~Daubechies, \textit{Ten lectures on wavelets},
SIAM, Philadelphia, Pennsylvania, 1992.


\bibitem{DV} A.~Debrouwere, J.~Vindas,
\textit{On weighted inductive limits of spaces of
ultradifferentiable functions and their duals}, Math. Nachr. \textbf{292} (2019), 573--602.

\bibitem{DzH} J.~Dziuba\'nski, E.~Hern\'andez,
\textit{Band-limited wavelets with subexponential decay}, Canad.
Math. Bull. \textbf{41} (1998), 398--403.

\bibitem{FKU} N.~Fukuda, T.~Kinoshita, I.~Uehara, \textit{On the wavelets having Gevrey
regularities and subexponential decays}, Math. Nachr. \textbf{287} (2014), 546--560.

\bibitem{FKY} N.~Fukuda, T.~Kinoshita, K.~Yoshino, \textit{Wavelet transforms on Gelfand-Shilov spaces and concrete examples}, J. Inequal. Appl., Paper 119 (2017), 24 pp.

\bibitem{GS} I.~M.~{G}elfand, G.~E.~{S}hilov, \textit{Generalized
functions, Vols. II and III}, Academic Press, 1968.


\bibitem{Gram} T.~Gramchev, \textit{Gelfand-Shilov spaces: structural properties and applications to pseudodifferential operators in $\mathbb{R}^n$}.
In: Quantization, PDEs, and geometry (D. Bahns, W. Bauer, I. Witt eds.), pp. 1--–68,
Oper. Theory Adv. Appl. \textbf{251}, Birkh\"auser/Springer, Cham, 2016. 

\bibitem{HW} E.~Hern\'andez, G.~Weiss, \textit{A first course on wavelets}, CRC Press, Boca Raton, 1996.

\bibitem{hol1} M.~Holschneider, \textit{Wavelets. An analysis tool,} The Clarendon Press, Oxford University Press, New York, 1995.

\bibitem{KosVin} S.~Kostadinova, J.~Vindas, \textit{Multiresolution expansions of
distributions: pointwise convergence and quasiasymptotic
behavior}, Acta Appl. Math. \textbf{138} (2015), 115--134.

\bibitem{LM} P.~G.~Lemari\'e, Y.~Meyer, \textit{Ondelettes et bases hilbertiennes},
Rev. Mat. Iberoamericana \textbf{2} (1986), 1--18.

\bibitem{Mey} Y.~Meyer, \textit{Wavelets and operators}, Cambridge University Press, Cambridge, 1992.

\bibitem{Mor} M.~Morimoto, \textit{An introduction to Sato's hyperfunctions}, A.M.S.,
Providence, 1993.

\bibitem{MT} S.~Moritoh, K.~Tomoeda,
\textit{A further decay estimate for the Dziuba\'{n}ski-Hern\'andez wavelets},
Canad. Math. Bull. \textbf{53} (2010), 133--139.

\bibitem{NR} F.~Nicola, L.~Rodino, \textit{Global pseudo-differential
calculus on Euclidean spaces}, Birkh{\"a}user Verlag,
Basel, 2010.

\bibitem{pathak-s2008}R.~S.~Pathak, S.~K.~Singh, \emph{Infraexponential decay of wavelets,} Proc. Nat. Acad. Sci. India Sect. A \textbf{78} (2008), 155--162. 

\bibitem{PPV}  S.~Pilipovi\'c, B.~Prangoski, J.~Vindas,
\textit{On quasianalytic classes of Gelfand-Shilov type.
Parametrix and convolution,} J. Math. Pures Appl. \textbf{116} (2018),
174--210.

\bibitem{PRTV} S.~Pilipovi\'{c}, D.~Raki\'{c}, N.~Teofanov, J.~Vindas,
\textit{The wavelet transforms in Gelfand-Shilov spaces}, Collect. Math. \textbf{67} (2016), 443--460.

\bibitem{PRV} S.~Pilipovi\'{c}, D.~Raki\'{c},  J.~Vindas, \textit{New classes of weighted H\"{o}lder-Zygmund spaces and the wavelet transform},  J. Funct. Spaces Appl. Vol. 2012 (2012), Art.
ID 815475, 18 pp.

\bibitem{PilTeo} S.~Pilipovi\'{c}, N.~Teofanov, \textit{Multiresolution expansion,
approximation order and quasiasymptotic of tempered
distributions}, J. Math. Anal. Appl. \textbf{331} (2007), 455--471.



\bibitem{pilipovic-vindas2019} S.~Pilipovi\'{c}, J.~Vindas, \emph{Tauberian class estimates for vector-valued distributions,} Sb. Math. \textbf{210} (2019), 272--296.



\bibitem{Rudin} W.~Rudin, \emph{Real and complex analysis}, Tata McGraw-Hill Education, 1987.


\bibitem{SanVin} K.~Saneva, J.~Vindas, \textit{Wavelet expansions and asymptotic behavior of
distributions}, J. Math. Anal. Appl. \textbf{370} (2010), 543--554.




\end{thebibliography}
\end{document}